\newtheorem{theorem}{\sc Theorem}[section]
\newtheorem{lemma}[theorem]{\sc Lemma}
\newtheorem{corollary}[theorem]{\sc Corollary}
\newtheorem{conjecture}[theorem]{\sc Conjecture}
\newcommand{\al }{\alpha }
\renewcommand{\>}{\rangle}
\newcommand{\<}{\langle}
\begin{document}
\author{Jo\~ao Azevedo}
\address{Department of Mathematics, University of Brasilia\\
Brasilia-DF \\ 70910-900 Brazil}
\email{J.P.P.Azevedo@mat.unb.br}

\author{Pavel Shumyatsky}
\address{Department of Mathematics, University of Brasilia\\
Brasilia-DF \\ 70910-900 Brazil}
\email{pavel@unb.br}
\thanks{Supported by CNPq and FAPDF}
\keywords{Words, Verbal subgroups, Conciseness}
\subjclass[2010]{Primary 20F10; Secondary 20E18}

\title[Conciseness of some words]{On finiteness of verbal subgroups}
 \begin{abstract} Given a group-word $w$ and a group $G$, the set of $w$-values in $G$ is denoted by $G_w$ and the verbal subgroup $w(G)$ is the one generated by $G_w$. The word $w$ is concise if $w(G)$ is finite for all groups $G$ in which $G_w$ is finite. We obtain several results supporting the conjecture that the word $[u_1,\dots,u_s]$ is concise whenever the words $u_1,\dots,u_s$ are non-commutator.
\end{abstract}
\maketitle

\section{Introduction}

If $w=w(x_1,\dots,x_k)$ is a group-word in variables $x_1,\dots,x_k$,
we denote by $w(G)$ the verbal subgroup of the group $G$ generated by the set $G_w$
of all values of $w$ in $G$. A word $w$ is concise if $w(G)$ is finite for every group $G$ such that $G_w$ is finite. P. Hall had conjectured that every word is concise but the conjecture was refuted by Ivanov in \cite{ivanov}. Further examples of words which are not concise were produced in \cite{olsh}.

On the other hand, many words are known to be concise. P. Hall  proved conciseness of non-commutator words (i.e., a word such that the sum of the exponents of some variable involved in
it is non-zero). In \cite{ts} Turner-Smith showed that the derived words are concise, and J.C.R. Wilson \cite{jcrw} subsequently extended this result to all multilinear commutator words. Multilinear commutator words, also called outer commutators, are words obtained by nesting commutators, but using always different variables. Another proof of conciseness of multilinear commutators was obtained in \cite{femo}. It was proved in \cite{femotra} and, independently, in \cite{abru} that the $n$th Engel word $[x,_ny]$ is concise for $n\leq4$. The main result of the recent paper \cite{detotoshu} says that any word that can be written as a commutator of two non-commutator words is concise.  In view of the latter result the following conjecture is natural.

\begin{conjecture}\label{conj} The word $[u_1,\dots,u_s]$ is concise whenever the words $u_1,\dots,u_s$ are non-commutators.
\end{conjecture}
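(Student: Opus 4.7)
The natural plan is induction on $s$, with base cases $s=1$ (Hall's theorem on conciseness of non-commutator words) and $s=2$ (the main theorem of \cite{detotoshu}) already available. So assume $s\ge 3$, write $w=[v,u_s]$ with $v=[u_1,\dots,u_{s-1}]$, and let $G$ be a group with $G_w$ finite. By the standard reductions of the area one may assume $G$ is generated by its $w$-values, is residually finite, and ultimately (once one seeks a bound on $|w(G)|$ in terms of $|G_w|$) finite.

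In the finite setting, the verbal subgroups $U_i=u_i(G)$ are all normal in $G$, and $w(G)\le [U_1,\dots,U_s]$. A first useful observation is that for each $u_s$-value $y$, the fibres of the map $x\mapsto [x,y]$ on $G_v$ are the intersections of $G_v$ with the cosets of $C_G(y)$, so $G_v$ meets at most $|G_w|$ such cosets; symmetrically, each $v$-value acts with boundedly many orbits on $G_{u_s}$. This BFC-type control is the starting point for invoking Neumann-style theorems that bound suitable sections of the verbal subgroups modulo centralizers.

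The main obstacle is that $v$ is itself a commutator word, so neither Hall's theorem nor the inductive hypothesis yields direct information about $v(G)$: applying induction to $v$ would require $G_v$ to be finite, which is exactly what is not known. A plausible circumvention is to strengthen the inductive statement --- asserting, say, that whenever $[u_1,\dots,u_j]$ takes finitely many values modulo a suitable central piece of $G$, the corresponding verbal subgroup is finite modulo that piece --- and then apply the strengthened induction to $v$ inside $G/C_G(\langle G_{u_s}\rangle)$, where the BFC-bound above forces $v$-values into finitely many classes. One would then close the argument via the $s=2$ case of \cite{detotoshu} applied to the commutator $[v(G),u_s(G)]$ of normal subgroups.

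The step I expect to be hardest, and the reason the statement remains a conjecture, is precisely this control of $v(G)$ without an \emph{a priori} finiteness assumption on $G_v$. Overcoming it seems to require new input beyond what is in \cite{detotoshu}: either a refinement of that technique sensitive to the internal commutator structure of $v$, or a Lie-theoretic linearization in the spirit of the Engel-word treatments in \cite{femotra,abru}. Without such an ingredient, plain induction on $s$ is unlikely to close the gap.
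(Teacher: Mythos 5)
You have correctly recognized that the statement is a \emph{conjecture}, and your proposal is not a proof: by your own admission the inductive step does not close. The paper does not prove Conjecture~\ref{conj} either; it only establishes special cases (Theorem~\ref{3 non-commutators} for $s=3$, Theorem~\ref{engel-like non-commutator} for $u_1=\dots=u_s$, and Corollary~\ref{coro} for profinite groups). The gap you name --- that $v=[u_1,\dots,u_{s-1}]$ is a commutator word, so neither Hall's theorem nor the inductive hypothesis gives any handle on $v(G)$ without an a priori finiteness assumption on $G_v$ --- is exactly the obstruction that keeps the general statement open, so your diagnosis is accurate. One correction of emphasis: your base case $s=2$ is not quite the main theorem of \cite{detotoshu} as you state it; that result concerns $[u,v]$ with \emph{both} $u$ and $v$ non-commutator, which is the $s=2$ instance of the conjecture, so the citation is appropriate, but it gives no information about $[u,v]$ when $u$ is itself a verbal subgroup of commutator type, which is what the induction would need.

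It is worth noting that your proposed circumvention --- pushing the induction through by controlling $v$-values modulo a centralizer --- is essentially what the paper achieves in the profinite setting. There, Theorem~\ref{[concise, non-commutator]} shows that $[u,v]$ is boundedly concise in profinite groups whenever $u$ is boundedly concise (not necessarily non-commutator) and $v$ is non-commutator; this is precisely the strengthened inductive statement you were reaching for, and Corollary~\ref{coro} is then your induction on $s$. The ingredients that make it work are the solution of the Restricted Burnside Problem and the Nikolov--Segal width results, which force $v(G)$ to be finitely generated with bounded width and allow the coset-counting argument of Lemma~\ref{fg aut group} to bound $u(G)C_2/C_2$. These tools are unavailable for abstract groups, which is why the paper's abstract-group results proceed by different, more ad hoc routes (the Three Subgroup Lemma and Lemma~\ref{first reduction argument} for $s=3$; Engel-type and Gruenberg arguments when $u_1=\dots=u_s$). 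So your plan is sound in spirit but, as you suspected, requires input beyond \cite{detotoshu} to execute outside the profinite category.
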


{\sc Convention.} Throughout the paper, and in Conjecture \ref{conj}, whenever we deal with a commutator word $[u,v]$ it is assumed that the variables involved in the word $u$ do not occur in $v$, and those involved in $v$ do not occur in $u$.

The purpose of this paper is to furnish evidence for the above conjecture. In particular we show that the conjecture holds if $s=3$:
\begin{theorem}\label{3 non-commutators} Let $u_1,u_2,u_3$ be non-commutator words. Then the word $[u_1,u_2,u_3]$ is concise.
\end{theorem}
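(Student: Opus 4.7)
\emph{Proof strategy.} The plan is to iterate the recent result of \cite{detotoshu} that $v=[u_1,u_2]$ is itself concise. Writing $w=[v,u_3]$, the first observation is that $w(G)=[v(G),u_3(G)]$. The inclusion $w(G)\subseteq[v(G),u_3(G)]$ is immediate; for the reverse, the commutator identity $[xy,z]=[x,z]^y[y,z]$ expresses any commutator of elements of $v(G)$ and $u_3(G)$ as a product of conjugates of $w$-values, and since $G_v$ and $G_{u_3}$ are each closed under $G$-conjugation, so is $G_w$, whence these conjugates lie again in $w(G)$.

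Assume $G_w$ is finite; we wish to show $w(G)$ is finite. A standard reduction allows us to take $G$ finitely generated: if $w(G)$ were infinite, we could find a finitely generated subgroup $H\le G$ with $w(H)$ infinite while $H_w\subseteq G_w$ remains finite. Next, invoke that $u_3$ is non-commutator: letting $e$ denote the gcd of the exponent sums of the variables in $u_3$, setting all but one variable to $1$ shows $u_3(G)\supseteq G^e$, so the finite set $G_w$ contains every commutator of the form $[a,g^e]$ with $a\in G_v$, $g\in G$. The aim is then to deduce, via a Neumann--Dicman type theorem for normal subsets with finitely many commutators, that the subgroup $[v(G),G^e]$ is finite. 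Combined with the conciseness of $v$ from \cite{detotoshu}, applied after passing to the quotient $G/[v(G),G^e]$ where $u_3(G)$ becomes essentially of bounded exponent, this should force $w(G)=[v(G),u_3(G)]$ to be finite.

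The principal obstacle is the passage from a finite \emph{set} of commutators $[G_v,G_{u_3}]$ to finiteness of the commutator \emph{subgroup} $[v(G),u_3(G)]$. This is precisely where the non-commutator hypothesis on all three $u_i$ enters in an essential way: it guarantees that each word-value set contains enough $e$-th powers to make commutator-finiteness propagate to the subgroup level, and it makes Hall's argument available at each stage. Making this transition rigorous, presumably by an induction involving derived length combined with iterated use of the conciseness of $v$ in successive finite quotients, will be the heart of the argument.
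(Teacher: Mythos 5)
Your proposal is a strategy outline rather than a proof: you yourself flag that ``the passage from a finite set of commutators $[G_v,G_{u_3}]$ to finiteness of the commutator subgroup $[v(G),u_3(G)]$'' is ``the heart of the argument,'' and that step is exactly what is left undone. There is no off-the-shelf Neumann--Dicman type theorem that performs this passage: Dicman's lemma needs the finite normal subset to consist of torsion elements, and B.~H.~Neumann/Hall-type results on finitely many commutators do not apply to a pair of verbal subgroups in this generality --- indeed, if such a general principle were available, Conjecture~1.1 would follow for all $s$ at once. A second soft spot is the claim that after factoring out $[v(G),G^e]$ the subgroup $u_3(G)$ ``becomes essentially of bounded exponent'': $G/G^e$ has exponent dividing $e$ but need not be finite or even locally finite in an abstract group, so conciseness of $v$ in the quotient gives you nothing directly; this kind of reduction only becomes effective in the profinite setting (where the restricted Burnside problem applies), which is how the paper argues in Section~6, not here.

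The paper's actual route is quite different and worth comparing against. After the standard reduction to $w(G)$ torsion-free abelian (Lemmas~\ref{w(G)' finite} and \ref{aaa}), it does not invoke conciseness of $v=[u_1,u_2]$ as a black box. Instead it: (i) proves the special case $w_0=[x_1^n,x_2^n,x_3^n]$ separately (as part of Theorem~\ref{engel-like non-commutator}) and uses it to make $G^n$ nilpotent of class at most $2$; (ii) exploits torsion-freeness throughout, so that showing generators have finite order already forces them to be trivial --- this is the mechanism that converts ``finite set of values'' into ``trivial subgroup,'' replacing the missing Dicman-type step; (iii) uses Lemma~\ref{[W, W_2, ..., W_k] = 1} together with the Three Subgroup Lemma to show $v(G)'$ centralizes $u_3(G)$, which via Schur's theorem (Lemma~\ref{first reduction argument}) reduces to the case $v(G)$ abelian; and (iv) finishes by a direct computation with $[g_1,g_2,g_3]^n=[[g_1,g_2]^n,g_3]$ in the class-$2$ quotient modulo $C_G(u_3(G))$. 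If you want to complete your own approach, you would essentially have to reconstruct steps (ii)--(iv); the torsion-free reduction and the Schur/Three-Subgroup-Lemma argument are the ideas your sketch is missing.
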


Further, we show that Conjecture \ref{conj} is valid in when $u_1=\dots=u_s$. More precisely, we establish the following theorem.

\begin{theorem}\label{engel-like non-commutator}
Let $u=u(x_1,\dots,x_k)$ and $v=v(y_1,\dots,y_l)$ be non-commutator words. For $i=1,2,\dots,s$ set $u_i=u(x_{1i},\dots,x_{ki})$. Then both words $[u_1,\dots,u_s]$ and $[v,u_1,\dots,u_s]$ are concise.
\end{theorem}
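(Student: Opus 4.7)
The plan is to induct on $s$ and prove both statements simultaneously. Observe first that the word $[u_1,\dots,u_s]$ is obtained from $[v,u_1,\dots,u_{s-1}]$ by specialising $v$ to a fresh variable-disjoint copy of $u$; hence it suffices to prove that $w:=[v,u_1,\dots,u_s]$ is concise for every pair of non-commutator words $u,v$. The base case $s=1$ is the main theorem of \cite{detotoshu} (applied to the commutator $[v,u_1]$ of two non-commutator words), while $[u_1]=u$ is covered by Hall's theorem.

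For the inductive step, assume the statement at $s-1$ and let $G$ be a group with $G_w$ finite, where $w=[v,u_1,\dots,u_s]$. Since the variables of $v,u_1,\dots,u_s$ are pairwise disjoint, $G_w$ is closed under $G$-conjugation; in particular, every $w$-value has a finite $G$-conjugacy class. By Dicman's lemma, it is therefore enough to show that every $w$-value has finite order. Write a $w$-value as $x=[y,a]$ with $y\in G_{w'}$ and $a\in G_u$, where $w':=[v,u_1,\dots,u_{s-1}]$, and set $U=u(G)$. For each fixed $y\in G_{w'}$, the set $\{[y,b]\mid b\in G_u\}\subseteq G_w$ is finite, and since $G_u$ generates $U$, standard commutator manipulations yield that the $U$-orbit $y^U$ is finite; hence every $w'$-value lies in the $U$-FC-centre of $G$.

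The key step is to convert this $U$-orbit finiteness into torsion of $x=[y,a]$. I would proceed by reducing to the case where $G$ is finitely generated by $G_v\cup G_u$, then passing to a finite-index normal subgroup $N\trianglelefteq G$ centralising a chosen orbit $y^U$, and invoking the inductive hypothesis on a suitable quotient $G/K$ with $K$ a finite $G$-invariant subgroup of $w(G)$. Here the non-commutator character of both $u$ and $v$ is crucial: it prevents a pathological collapse in the inductive subgroup and allows the hypothesis at $s-1$ to deliver a finiteness statement for the image of $w'(G/K)$, which in turn pins down the order of the image of $y$; combined with the centralising action of $N$ on $y^U$, this produces a bound on the order of $[y,a]$ modulo $K$, and hence the torsion of $x$ itself.

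The principal obstacle is this torsion step. In \cite{detotoshu} the non-commutator hypothesis could be applied directly to both entries of an outer commutator, but here $y$ is itself a value of a commutator word, so the hypothesis must be routed through the inductive statement for $w'$. The delicate issue is the joint choice of $N$ and of the finite characteristic $K\le w(G)$ so that the torsion information obtained in the quotient actually lifts back to $G$, while still leaving enough $w'$-values in $N$ to trigger the inductive hypothesis. Once torsion is secured, Dicman's lemma gives $w(G)$ finite and the induction closes.
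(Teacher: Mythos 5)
Your reduction of $[u_1,\dots,u_s]$ to the family $[v,u_1,\dots,u_{s-1}]$ is legitimate, and your overall target (show every $w$-value is torsion, then conclude by Dicman's lemma) is consistent with the paper's strategy of reducing to $w(G)$ torsion-free abelian. But there are two genuine gaps. First, the assertion that finiteness of $\{[y,b] : b\in G_u\}$ together with the fact that $G_u$ generates $U=u(G)$ yields finiteness of the orbit $y^U$ by ``standard commutator manipulations'' is not true as stated: for an infinite normal generating subset $Y$ of a subgroup $K$, finiteness of $y^Y$ does not imply finiteness of $y^K$, since the iterated conjugates $y^{b_1b_2\cdots b_r}$ need not return to the finite set $y^Y$. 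This is precisely the content of the paper's Lemma \ref{finite K-conjugation classes}, whose proof is a nontrivial rewriting and well-ordering argument and which requires the \emph{additional} hypothesis that for each $b\in Y$ some power $b^{m}$ centralizes $y^K$; that hypothesis is in turn supplied by Lemma \ref{[u(G),G^n] = 1}, using the non-commutator character of the words. You neither state nor verify this centralizing condition, so the orbit-finiteness step fails as written.

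Second, your ``torsion step'' is a description of intentions rather than an argument, and the plan has a structural obstruction: to invoke the inductive hypothesis for $w'=[v,u_1,\dots,u_{s-1}]$ in a quotient $G/K$ you would need the image of $G_{w'}$ there to be finite, and nothing in the hypothesis that $G_w$ is finite controls the size of $G_{w'}$, even modulo a finite normal subgroup --- finiteness of $G_w$ constrains the commutators $[y,a]$, not the set of elements $y$ themselves. The paper avoids this issue entirely: it never attempts to bound $G_{w'}$, and instead derives from Lemma \ref{[W, W_2, ..., W_k] = 1} that $u$-values are $(2s-1)$-Engel, applies Gruenberg's theorem to get $u(G)$ locally nilpotent, settles the special word $[x_1^n,\dots,x_s^n]$ first (via Lemmas \ref{finite K-conjugation classes} and \ref{p-lemma}) so that $G^n$ becomes nilpotent, and then kills the remaining generators of $w(G)$ with Schur's theorem applied to suitable two-generator subgroups. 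As it stands, your proposal establishes neither the orbit finiteness nor the torsion of $w$-values, which are the two essential points.
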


In recent years there has been considerable interest in the question whether all words are concise in profinite groups (cf. Segal \cite[p.\ 15]{Segal} or Jaikin-Zapirain \cite{jaikin}). An equivalent form of this question is whether all words are concise in the class of residually finite groups. Recall that a word $w$ is concise in a class of groups $\mathcal X$ if $w(G)$ is finite in every group $G\in\mathcal X$ such that $G_w$ is finite. Currently, there are no known examples of profinite groups $G$ admitting a word $w$ such that $G_w$ is finite while $w(G)$ is infinite. On the other hand, there are words that are  concise in profinite groups while their conciseness in the class of all groups is still unknown (see \cite{dms1,dms2,dms3} and references therein). 

A word $w$ is boundedly concise in a class of groups $\mathcal X$ if for every integer $m$ there exists a number $\nu=\nu(\mathcal X,w,m)$ such that whenever $|G_w|\leq m$ for a group $G\in\mathcal X$ it follows that $|w(G)|\leq\nu$. Every word which is concise in the class of all groups is actually boundedly concise \cite{femo} (however it is unclear whether every word which is concise in profinite groups is boundedly concise).

Our next result provides a useful tool for proving conciseness of words in profinite (or residually finite) groups.

\begin{theorem}\label{[concise, non-commutator]}
Let $u$ be a word which is (boundedly) concise in profinite groups and $v$ a non-commutator word. The word $[u,v]$ is (boundedly) concise in profinite groups. 
\end{theorem}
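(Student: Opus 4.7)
The strategy is to pass to the centralizer $C=C_G(K)$ of $K=[u,v](G)=[u(G),v(G)]$ and to leverage both the conciseness hypothesis on $u$ (applied in the finite quotient $G/C$) and Hall's bounded conciseness of the non-commutator word $v$. The plan is to show that $|K/Z(K)|$ and $|K'|$ are bounded by more or less formal arguments, and then to attack the harder boundedness of $|Z(K)|$ by exploiting the fact that $G_v$ contains all $n$-th powers.

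Let $G$ be a profinite group with $X:=G_{[u,v]}$ finite. The set $X$ is conjugation-invariant because $G_u$ and $G_v$ are, so $K=\langle X\rangle$ is normal in $G$ and coincides with $[u(G),v(G)]$. Setting $C=C_G(X)=C_G(K)$, the quotient $G/C$ embeds into $\operatorname{Sym}(X)$, so $C$ is open in $G$ with $|G/C|\leq|X|!$ and $Z(K)=K\cap C$. The (bounded) conciseness of $u$ in profinite groups, applied to the finite quotient $G/C$, bounds $|u(G)/u(G)\cap C|$, and Hall's bounded conciseness of the non-commutator $v$, applied to the same quotient, bounds $|v(G)/v(G)\cap C|$. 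Fix bounded-size lists of coset representatives $a_1,\dots,a_r\in u(G)$ and $b_1,\dots,b_s\in v(G)$.

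The key observation is that for $\alpha\in u(G)\cap C$ and $\beta\in v(G)$ the commutator $[\alpha,\beta]=\alpha^{-1}\alpha^{\beta}$ lies in $C$ (since $C$ is normal in $G$ and both $\alpha$ and $\alpha^{\beta}$ belong to $C$) as well as in $K$, hence in $Z(K)$; by the identity $[\alpha,\beta]=[\beta,\alpha]^{-1}=(\beta^{-1}\beta^{\alpha})^{-1}$ the symmetric inclusion $[u(G),v(G)\cap C]\leq Z(K)$ also holds. A standard commutator expansion then gives $[a,b]\equiv[a_i,b_j]\pmod{Z(K)}$ whenever $a\in a_i(u(G)\cap C)$ and $b\in b_j(v(G)\cap C)$. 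Consequently $K/Z(K)$ is generated modulo $Z(K)$ by the finite set $\{[a_i,b_j]\}\subseteq X$, confirming $|K/Z(K)|\leq|X|!$, and Schur's theorem yields $|K'|$ bounded.

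The remaining and principal difficulty is bounding $|Z(K)|$. Since $K$ is finitely generated (by $X$) and both $K'$ and $K/Z(K)$ are finite, $Z(K)$ is a finitely generated abelian group, and it suffices to bound its torsion exponent. Here the non-commutator nature of $v$ is essential: if $n$ is a nonzero exponent sum of $v$ in one of its variables, then $g^n\in G_v$ for every $g\in G$, and the set $\{[a_i,g^n]:g\in G\}$ is a finite subset of $X$. Combining the identity $[a_i,g^n]=\prod_{j=0}^{n-1}[a_i,g]^{g^j}$ with the fact that, in the $G$-coinvariant quotient $K/[K,G]$, conjugation acts trivially, one obtains that $m\cdot\overline{[a_i,g]}$ lies in the finite image of $X$ for every integer $m$, which bounds the order of $\overline{[a_i,g]}$ in $K/[K,G]$; this bound is then propagated back to the torsion of $Z(K)$ modulo $K'$ via the finite-index information obtained in the previous step.

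The main obstacle is precisely this last transfer: for $g\notin v(G)$ the commutator $[a_i,g]$ lies only in the larger normal subgroup $[u(G),G]$ and not in $K$, so the torsion-of-$Z(K)$ argument has to be carried out in an ambient strictly larger than $K$ and then pulled back through the filtration $K'\leq K\cap[K,G]\leq K$. Making this transfer quantitative and uniform in $|X|$, while simultaneously reconciling the bounds coming from the conciseness of $u$ on the $u(G)$-side and from Hall's theorem on the $v(G)$-side, is the crux of the proof. Once $|Z(K)|$ is bounded, assembling the estimates on $|K/Z(K)|$, $|K'|$, and $|Z(K)|$ yields the required bound on $|[u,v](G)|$ in terms of $|G_{[u,v]}|$.
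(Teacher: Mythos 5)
There is a genuine gap, and you flag it yourself: the entire difficulty of the theorem is concentrated in the finiteness of $Z(K)$, and your proposal leaves exactly that step unresolved (``the crux of the proof''). The part you do carry out --- that $|K/Z(K)|\leq |X|!$ and hence $|K'|$ is bounded by Schur --- is correct but is precisely Lemma~\ref{w(G)' finite} of the paper, which holds for \emph{every} word; note that in this part the conciseness hypothesis on $u$ is never genuinely used, since $G/C_G(X)$ is finite of order at most $|X|!$ for trivial reasons. A proof of this theorem must invoke the conciseness of $u$ in an essential way, and your sketch never does. The mechanism you propose for $Z(K)$ also does not work as stated: for arbitrary $g\in G$ the element $[a_i,g]$ need not lie in $K$ (as you note), the expansion $[a_i,g^n]=\prod_j [a_i,g]^{g^j}$ only controls images in $[u(G),G]/[[u(G),G],G]$ rather than anything inside $Z(K)$, and even granting a bound on the torsion exponent of the finitely generated abelian group $Z(K)$, that alone does not make $Z(K)$ finite --- one must rule out a free abelian part, which is where the real work lies.

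For comparison, the paper's proof takes a different and complete route. It first reduces to a finitely generated profinite group $G$ with $G_w=H_w$, then uses Zelmanov's solution of the Restricted Burnside Problem together with $G^n\leq v(G)$ to make $v(G)$ open of bounded index, and Nikolov--Segal's verbal width theorem to generate $v(G)$ by boundedly many $v$-values $a_1,\dots,a_s$. Lemma~\ref{fg aut group} then shows that $G_u$ meets boundedly many cosets of $C_G(v(G))$, so $u$ takes boundedly many values in the (generally infinite) profinite quotient $G/C_G(v(G))$; it is here that the bounded conciseness of $u$ in profinite groups is applied, yielding that $u(G)$ acts on $v(G)$ through a finite group. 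A symmetric argument bounds $v(G)C_G(u(G))/C_G(u(G))$, and Lemma~\ref{condition for finite exponent} then bounds the exponent of $[u(G),v(G)]$, which combined with Lemma~\ref{w(G)' finite} gives the bounded order of $w(G)$. If you want to salvage your approach, the missing ingredient is exactly this passage through $C_G(v(G))$ and $C_G(u(G))$ (rather than $C_G(K)$), since that is what converts the hypothesis on $u$ into control of $Z(K)$.
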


An immediate corollary of the above result is that Conjecture \ref{conj} is valid for profinite groups in the following stronger form.

\begin{corollary}\label{coro} The word $[u_1,\dots,u_s]$ is boundedly concise in profinite groups whenever the words $u_1,\dots,u_s$ are non-commutator.
\end{corollary}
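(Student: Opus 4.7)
The plan is to deduce the corollary by induction on $s$, with Theorem \ref{[concise, non-commutator]} serving as the inductive engine. Since a commutator $[u_1,\dots,u_s]$ is by convention left-normed, it can be rewritten as $[[u_1,\dots,u_{s-1}],u_s]$, and Theorem \ref{[concise, non-commutator]} is designed precisely to handle such a commutator as soon as the outer operand is non-commutator. So the proof is essentially a formal iteration.

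For the base case $s=1$ the word $[u_1]=u_1$ is itself a non-commutator word, and P.~Hall's classical theorem together with the bounded version in \cite{femo} shows that every non-commutator word is boundedly concise in the class of all groups, and therefore in profinite groups. For the inductive step, assuming the statement for $s-1$, I would set $u=[u_1,\dots,u_{s-1}]$ and $v=u_s$. By the inductive hypothesis $u$ is boundedly concise in profinite groups; since $v=u_s$ is non-commutator, Theorem \ref{[concise, non-commutator]} applied to the pair $(u,v)$ yields that $[u,v]=[u_1,\dots,u_s]$ is boundedly concise in profinite groups, completing the induction.

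The only non-notational point to verify in iterating Theorem \ref{[concise, non-commutator]} is the disjoint-variables convention recalled in the introduction: the variables involved in the word $u$ must not occur in $v$ and vice versa. This is automatic here because $u_1,\dots,u_s$ are presented on pairwise disjoint variable sets in Conjecture \ref{conj}, so $u=[u_1,\dots,u_{s-1}]$ and $v=u_s$ indeed involve disjoint sets of variables. Given this, there is no genuine obstacle in the argument; the corollary is a direct consequence of Theorem \ref{[concise, non-commutator]} obtained by $s-1$ successive applications, and the bound on $|[u_1,\dots,u_s](G)|$ is built up from the bound supplied by \cite{femo} at the base step and the bound produced at each application of Theorem \ref{[concise, non-commutator]}.
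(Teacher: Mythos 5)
Your proposal is correct and coincides with the paper's own argument: the paper deduces the corollary by induction on $s$, using bounded conciseness of non-commutator words as the base case and Theorem \ref{[concise, non-commutator]} applied to $u=[u_1,\dots,u_{s-1}]$ and $v=u_s$ for the inductive step. Your additional remark about the disjoint-variables convention is a harmless point the paper leaves implicit.
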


Indeed, we know that any non-commutator word is boundedly concise (in the class of all groups). By induction on $s$ assume that the word $[u_1,\dots,u_{s-1}]$ is boundedly concise in profinite groups. Thus, an application of Theorem \ref{[concise, non-commutator]} proves the result.

It should be noted that the techniques developed in this paper can be used to establish conciseness of some other words. In fact, the results obtained here indirectly suggest that perhaps every commutator word $[u,v]$ is concise whenever the words $u$ and $v$ are.
This is exemplified in our next result. 
\begin{theorem}\label{mcw, non-commutator}
Let $u$ be a multilinear commutator word and $v$ a non-commutator word. The word $[u,v]$ is concise.
\end{theorem}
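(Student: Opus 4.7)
The plan is to combine a Schur-type reduction with induction on the weight of the multilinear commutator word $u$, falling back on the earlier results of this paper.

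Suppose $G_w$ is finite with $w=[u,v]$, and set $W=w(G)=[u(G),v(G)]$. Since $G_w$ is a finite normal subset of $G$, the centralizer $C=C_G(G_w)=C_G(W)$ has index at most $|G_w|!$ in $G$. Thus $W/Z(W)$ is finite and Schur's theorem forces the derived subgroup $[W,W]$ to be finite. Replacing $G$ by $G/[W,W]$, I may assume $W$ is abelian. As $W$ is then a finitely generated abelian group (generated by the finite normal set $G_w$), it suffices to prove that $W$ is torsion.

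Now I would induct on the weight $k$ of the multilinear commutator $u$. The base case $k=2$ has $u=[x_1,x_2]$, so $[u,v]$ equals the left-normed commutator $[x_1,x_2,v]$ of the three non-commutator words $x_1,x_2,v$, and Theorem \ref{3 non-commutators} applies. For the inductive step, write $u=[u',u'']$, where $u'$ and $u''$ are multilinear commutator words of strictly smaller weight. Since $u(G)=[u'(G),u''(G)]$, the three-subgroup lemma applied to the normal subgroups $u'(G), u''(G), v(G)$ yields
$$ W \;\leq\; [[u'(G),v(G)],u''(G)]\cdot[[u''(G),v(G)],u'(G)], $$
so $W$ is contained in a product of subgroups of the verbal subgroups $[u',v](G)$ and $[u'',v](G)$. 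By the inductive hypothesis, both $[u',v]$ and $[u'',v]$ are concise, and since any commutator of a finite normal subgroup with anything is again finite, the right-hand side would be finite provided that $G_{[u',v]}$ and $G_{[u'',v]}$ are finite in $G$.

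This last step is the main obstacle: the finiteness of $G_w$ does not a priori force finiteness of $G_{[u',v]}$ or $G_{[u'',v]}$, since the latter arise from narrower words. To bridge the gap I would exploit the power structure coming from $v$ being non-commutator: if $n$ is the exponent sum of some variable in $v$, then $g^n\in G_v$ for every $g\in G$, so $[a,g^n]\in G_w$ for every $a\in G_u$ and every $g\in G$. Inside the abelian $W$ one has the linearization
$$ [a,g^n] \;=\; \prod_{i=0}^{n-1}[a,g]^{g^i}, $$
which, combined with Wilson's conciseness of the multilinear commutator $u$ applied to an auxiliary construction (such as a suitable quotient of $G$ in which $v(G)$ is replaced by the subgroup generated by $n$th powers), should force $G_{[u',v]}$ and $G_{[u'',v]}$ to be finite modulo a finite subgroup. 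Feeding this back into the inductive conciseness of $[u',v]$ and $[u'',v]$ then closes the induction and yields finiteness of $W$.
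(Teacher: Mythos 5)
Your reduction to $w(G)$ abelian and finitely generated is fine (it is the paper's Lemma \ref{w(G)' finite} plus Lemma \ref{aaa}), and your base cases are correctly handled by Theorem \ref{3 non-commutators} and the result of \cite{detotoshu}. But the inductive step has a genuine gap, and it is exactly the one you flag yourself. Writing $u=[u',u'']$ and using the Three Subgroup Lemma to squeeze $W$ between $[[u'(G),v(G)],u''(G)]$ and $[[u''(G),v(G)],u'(G)]$ only helps if you can invoke conciseness of $[u',v]$ and $[u'',v]$, and conciseness is a conditional statement: it needs the \emph{value sets} $G_{[u',v]}$ and $G_{[u'',v]}$ to be finite. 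Finiteness of $G_{[u,v]}$ gives no control whatsoever over these larger value sets --- this is precisely why conciseness of composed words cannot be proved by naive induction on subwords. Your proposed bridge (the identity $[a,g^n]=\prod_i [a,g]^{g^i}$ together with ``an auxiliary construction'' and Wilson's theorem) is not an argument: the elements $[a,g]$ for $a\in G_{u'}$ do not lie in $W$, so the abelianness of $W$ cannot be used to linearize them, and no mechanism is given that would actually bound $|G_{[u',v]}|$. The phrase ``should force'' is carrying the entire weight of the proof.

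The paper takes a different route that avoids this obstacle entirely. First it shows $[u(G),w(G)]=1$ by the power argument (if $G^n\le v(G)$ and $[u(G),G^n]=1$ via Lemma \ref{[u(G),G^n] = 1}, then $[a,b]^n=[a,b^n]=1$ for $a\in G_u$, $b\in w(G)$, and torsion-freeness kills $[u(G),w(G)]$). The Three Subgroup Lemma then gives $u(G)'\le C_G(v(G))$, so by Lemma \ref{bbb} the quotient $G/C_G(v(G))$ is \emph{soluble}. In the soluble case one does induct, but on the length of the Fern\'andez-Alcober--Morigi normal series of $u(G)$ (Lemma \ref{fernandez-alcober morigi}), whose first term $U$ is generated by $u$-values all of whose powers are again $u$-values; this is what makes $\langle[a,b]\rangle\subseteq G_w$ and hence $[U,v(G)]=1$, and the induction closes with Lemma \ref{first reduction argument}. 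If you want to salvage your outline, you would need to replace induction on the weight of $u$ by some induction in which the hypothesis ``$G_w$ finite'' is preserved, which is what the series of Lemma \ref{fernandez-alcober morigi} accomplishes.
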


In the next section we collect some helpful lemmas that will be needed in the proofs of the main theorems. Section 3 contains a proof of Theorem \ref{engel-like non-commutator}. In Section 4 we establish Theorem \ref{3 non-commutators}. Section 5 deals with Theorem \ref{mcw, non-commutator}. In Section 6 we handle profinite groups and prove Theorem \ref{[concise, non-commutator]}.

\section{Preliminaries}

The first lemma is straightforward. Here and throughout the article we use the expression ``$(a,b,\dots)$-bounded" to mean that a quantity is bounded by a certain number depending only on the parameters $a,b,\dots$. 
\begin{lemma}\label{normal abelian}
Let $A$ be an abelian normal subgroup of a group $G$. Then $[ab,c]=[a,c][b,c]$ for all $a,b\in A$ and $c\in G$. In particular, if $n$ is an integer, $[a^n,c]=[a,c]^n$.
\end{lemma}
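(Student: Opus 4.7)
The plan is to derive the identity from the standard Hall--Witt style commutator expansion $[ab,c]=[a,c]^{b}[b,c]$, which holds in any group. The only thing one needs to verify is that the conjugation by $b$ in the first factor is trivial in this setting.

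First I would observe that because $A$ is normal in $G$, for any $a\in A$ and $c\in G$ the element $c^{-1}ac$ lies in $A$, and therefore $[a,c]=a^{-1}(c^{-1}ac)\in A$. Since $A$ is abelian and $b\in A$, conjugation by $b$ fixes $[a,c]$, i.e.\ $[a,c]^{b}=[a,c]$. Substituting this into the general identity $[ab,c]=[a,c]^{b}[b,c]$ immediately yields $[ab,c]=[a,c][b,c]$, which is the first assertion.

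The second assertion $[a^{n},c]=[a,c]^{n}$ follows by induction on $n$ for $n\geq 0$: the base $n=0$ is trivial, and for the inductive step one applies the first part with $a^{n-1}$ and $a$ in place of $a$ and $b$ (both lie in $A$) to get $[a^{n},c]=[a^{n-1},c][a,c]=[a,c]^{n-1}[a,c]=[a,c]^{n}$. For negative exponents one uses the analogous identity $[a^{-1},c]=([a,c]^{-1})^{a^{-1}}=[a,c]^{-1}$, where the last equality again uses that $[a,c]\in A$ and $A$ is abelian, and then iterates.

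There is no real obstacle here; the only subtlety worth flagging is the step $[a,c]\in A$, which depends essentially on the normality of $A$. Once this is in place, both claims are immediate consequences of standard commutator calculus.
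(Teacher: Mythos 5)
Your proof is correct and is exactly the standard argument the paper has in mind (the paper omits the proof, calling the lemma straightforward): expand $[ab,c]=[a,c]^{b}[b,c]$, use normality of $A$ to see $[a,c]=a^{-1}a^{c}\in A$ so that conjugation by $b\in A$ is trivial, and induct for the power statement. Your handling of negative exponents via $[a^{-1},c]=[a,c]^{-1}$ is a welcome bit of care, since the lemma does assert the identity for an arbitrary integer $n$.
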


The next result is well known (see for example \cite[Lemma 4]{dms1}).

\begin{lemma}\label{w(G)' finite}
Let $w$ be a group-word and let $G$ be a group in which $w$ takes only finitely many, say $m$, values. Then the commutator subgroup $w(G)'$ is finite with $m$-bounded order.
\end{lemma}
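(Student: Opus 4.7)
The plan is to combine the classical B.H.~Neumann centralizer trick with the quantitative version of Schur's theorem. The starting observation is that $G_w$ is closed under conjugation in $G$, since any conjugate of a $w$-value is itself a $w$-value; thus $G_w$ is a finite $G$-invariant subset of size $m$.

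For each $x\in G_w$, the conjugacy class $x^G$ is contained in $G_w$, so $|G:C_G(x)|\leq m$. Setting $C=\bigcap_{x\in G_w}C_G(x)$, we get $|G:C|\leq m^m$ as an intersection of $m$ subgroups, each of index at most $m$. Since $G_w$ generates $w(G)$ and $C$ centralizes every element of $G_w$, we have $C\cap w(G)\leq Z(w(G))$, and therefore
\[
|w(G):Z(w(G))|\leq |w(G):w(G)\cap C|\leq |G:C|\leq m^m.
\]

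The proof is then finished by invoking the quantitative form of Schur's theorem (Wiegold's bound): if $H/Z(H)$ is finite of order $n$, then $H'$ is finite of $n$-bounded order. Applying this with $H=w(G)$ and $n\leq m^m$ yields that $w(G)'$ has $m$-bounded order, as claimed. I do not foresee any real obstacle: the argument is a two-step composition of standard tools, and the only ingredient beyond elementary commutator/centralizer bookkeeping is the known explicit bound in Schur's theorem.
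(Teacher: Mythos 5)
Your argument is correct: the Poincar\'e bound $|G:C|\leq m^m$ on the intersection of the $m$ centralizers, the inclusion $C\cap w(G)\leq Z(w(G))$, and the quantitative Schur--Wiegold theorem together give the $m$-bounded order of $w(G)'$. The paper does not prove this lemma but merely cites \cite[Lemma 4]{dms1}, and your proof is essentially the standard argument used there, so there is nothing to object to.
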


The following lemma will often be used without explicit references.
\begin{lemma}\label{aaa} Let $w$ be a group-word and let $G$ be a group in which $w$ takes only finitely many values. Suppose that $N$ is a normal torsion subgroup of $G$. Then $w(G)$ is finite if and only if so is $w(G)N/N$.
\end{lemma}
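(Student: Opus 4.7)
The \emph{only if} direction is immediate: a quotient of a finite group is finite. For the converse, the plan is to exploit Lemma \ref{w(G)' finite}, which gives that $w(G)'$ is finite, and reduce the problem to an assertion about the finitely generated abelian quotient $w(G)/w(G)'$.

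The first step I would record is that $w(G)$ is finitely generated. Since the set $G_w$ is closed under conjugation (one has $w(g_1,\dots,g_k)^g = w(g_1^g,\dots,g_k^g)$), the subgroup it generates is already normal in $G$ and therefore equals $w(G)$. As $|G_w|<\infty$ by hypothesis, $w(G)$ is generated by finitely many elements. Combined with Lemma \ref{w(G)' finite}, the quotient $A := w(G)/w(G)'$ is then a finitely generated abelian group.

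Next I would analyze the subgroup $B := (w(G)\cap N)\,w(G)'/w(G)'$ of $A$. Two properties of $B$ will be key. First, $B$ has finite index in $A$: from the isomorphism $w(G)/(w(G)\cap N)\cong w(G)N/N$ and the hypothesis that $w(G)N/N$ is finite, we see that $w(G)\cap N$ has finite index in $w(G)$. Second, every element of $B$ has finite order, because $w(G)\cap N\subseteq N$ is torsion and $w(G)'$ is finite, so a product of an element of $w(G)\cap N$ with an element of $w(G)'$ has finite order modulo $w(G)'$. Hence $B$ is contained in the torsion subgroup of the finitely generated abelian group $A$, which is finite. Combining $|A\!:\!B|<\infty$ with $|B|<\infty$ yields $|A|<\infty$, and then $w(G)$ is finite as an extension of $w(G)'$ by $A$.

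There is no genuine obstacle in the argument; the one step worth flagging is the observation that the normality of $G_w$ together with its finiteness makes $w(G)$ finitely generated. Without this, one could not apply the classical fact that the torsion subgroup of a finitely generated abelian group is finite, and the reduction to $A$ would collapse.
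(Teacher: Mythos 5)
Your argument is correct and is essentially the paper's own (the paper merely calls it ``straightforward'' and points to the finiteness of $N\cap w(G)$, which rests on exactly the ingredients you use: Lemma \ref{w(G)' finite}, the finite generation of $w(G)$ coming from the normality and finiteness of $G_w$, and the finiteness of the torsion subgroup of the finitely generated abelian group $w(G)/w(G)'$). Your write-up simply makes explicit what the paper leaves to the reader.
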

\begin{proof} This is straightforward using the fact that in view of Lemma \ref{w(G)' finite} the intersection $N\cap w(G)$ is finite.
\end{proof}

The next elementary lemma furnishes a useful tool for handling of non-commutator words. For the reader's convenience we provide a proof.
\begin{lemma}\label{non-commutator > x^n}
Let $w=w(x_1,\dots,x_k)$ be a non-commutator word. There is $n\geq1$ such that in any group $G$ all values of the word $x^n$ are $w$-values.
\end{lemma}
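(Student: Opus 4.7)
The plan is to use the very definition of ``non-commutator'': by hypothesis there exists an index $i \in \{1,\dots,k\}$ such that the sum of the exponents of $x_i$ occurring in the (freely reduced) word $w$ is a non-zero integer $m$. Set $n = |m|$.

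Given any group $G$ and any $g \in G$, I would consider the substitution $x_i \mapsto g^{\epsilon}$ (where $\epsilon = \pm 1$ is the sign of $m$) and $x_j \mapsto 1$ for every $j \neq i$. Since the identity element commutes with everything and $g^{\epsilon a}g^{\epsilon b} = g^{\epsilon(a+b)}$, evaluating $w$ under this substitution collapses the word to $g^{|m|} = g^n$. Hence every element of $\{g^n : g \in G\}$ arises as a value of $w$ in $G$, which is exactly the required statement.

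There is no serious obstacle here; the only thing to be a little careful about is the sign, which is handled simply by substituting $g^{-1}$ for $x_i$ when the exponent sum is negative, so that the exponent $n$ in the conclusion can be taken to be a positive integer as required.
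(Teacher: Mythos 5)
Your proposal is correct and follows essentially the same argument as the paper: substitute $1$ for all variables except the one with non-zero exponent sum, so that $w$ collapses to a power of the substituted element. The only (minor) difference is that you explicitly handle a negative exponent sum by substituting $g^{-1}$, a point the paper's proof passes over silently.
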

\begin{proof}
As $w$ is non-commutator, the sum of the exponents of some variable in $w$ is non-zero. Let $x_i$ be that variable and $n$ the sum. Given a group $G$, if we replace $x_i$ by an arbitrary element $g\in G$ and $x_j$ by 1 for $j\neq i$, then the word $w$ takes the value $g^n$ and we conclude that all $n$th powers in $G$ are $w$-values. 
\end{proof}

As usual, we denote by $G^n$ the subgroup of a group $G$ generated by the $n$th powers.

\begin{lemma}\label{[u(G),G^n] = 1} Let $w=[u,v]$ for some word $u$ and a non-commutator word $v$. Assume that $G$ is a group in which the set $G_w$ is finite. There exists a positive integer $n$, depending only on $v$ and $|G_w|$, such that $[u(G),G^n] = 1$.
\end{lemma}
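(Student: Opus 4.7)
The plan is to show the stronger statement that every $n$-th power of $G$ centralizes every $u$-value, for a suitable $n$ depending only on $v$ and $m:=|G_w|$; since $u(G)$ is generated by $G_u$, this at once yields $G^n \leq C_G(u(G))$, i.e.\ $[u(G), G^n] = 1$. The argument is a local orbit count carried out one element at a time, inside the cyclic subgroup $\langle g^{n_0}\rangle$ for each fixed $g$, with $n_0$ supplied by the non-commutator hypothesis on $v$.

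First I would invoke Lemma~\ref{non-commutator > x^n} applied to $v$ to obtain a positive integer $n_0$, depending only on $v$, such that every $n_0$-th power of $G$ is a $v$-value. Fix $a \in G_u$ and $g \in G$. For every integer $i$, the element $g^{n_0 i} = (g^i)^{n_0}$ is an $n_0$-th power, hence a $v$-value, so $[a, g^{n_0 i}]$ is a $w$-value and the conjugate
\[
a^{g^{n_0 i}} = a\cdot [a, g^{n_0 i}]
\]
lies in $a \cdot G_w$, a set of cardinality at most $m$. Thus the orbit of $a$ under the cyclic group $\langle g^{n_0}\rangle$ has at most $m$ elements, and by orbit-stabilizer the subgroup $\langle g^{n_0}\rangle \cap C_G(a)$ has index at most $m$ in $\langle g^{n_0}\rangle$.

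In any cyclic group, every subgroup of index at most $m$ contains the subgroup of $m!$-th powers. Applied here, this gives $(g^{n_0})^{m!} = g^{n_0 \cdot m!} \in C_G(a)$. Letting $a$ range over $G_u$ and $g$ over $G$, every $(n_0\cdot m!)$-th power of $G$ lies in $C_G(u(G))$, so the choice $n := n_0 \cdot m!$ establishes $[u(G), G^n] = 1$, with $n$ depending only on $v$ (through $n_0$) and on $|G_w|$. The argument is essentially mechanical once one spots that $g^{n_0 i}$ is a $v$-value for every $i \in \mathbb{Z}$, so there is no real obstacle; the only point worth care is handling negative $i$, which is immediate from the rewriting $g^{n_0 i} = (g^i)^{n_0}$.
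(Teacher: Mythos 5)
Your proof is correct and follows essentially the same route as the paper: both arguments use Lemma~\ref{non-commutator > x^n} to make all $n_0$-th (resp.\ $e$-th) powers $v$-values, observe that the commutators $[a,g^{n_0 i}]$ all lie in the finite set $G_w$, and conclude that $g^{n_0 m!}$ centralizes every $u$-value, arriving at the same bound $n=m!\,n_0$. The only difference is cosmetic (orbit--stabilizer counting versus the paper's pigeonhole on the commutators $[a,x^{re}]$ for $0\leq r\leq m$).
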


\begin{proof} Let $|G_w| = m$. By Lemma \ref{non-commutator > x^n} there is a positive integer $e$ such that all $e$th powers are also $v$-values in $G$. Choose $x\in G$, and let $a \in G_u$. The elements $[a,x^{re}]$ are $w$-values for all choices of $r$. Since $|G_w| = m$, for some integers $0 \leq r < s \leq m$ we must have $[a, x^{re}] = [a, x^{se}]$. This implies that $x^{(s-r)e}$ centralizes $a$, and we conclude that for every $a \in G_u$ and $x \in G$, there is a $t$, with $1 \leq t \leq m$, such that $[a, x^{te}] = 1$. Set $n = m!e$. We see that $x^n$ must centralize all $u$-values of $G$. Therefore $[u(G), G^n] = 1$.
\end{proof} 

\begin{lemma}\label{[W, W_2, ..., W_k] = 1} Let $w_1, \dots, w_k$ be non-commutator words and let $w = [w_1, \dots, w_k]$. Assume that $G$ is a group such that $G_w$ is finite. For $i=1,\dots,k$, let $V_i$ be the subgroup obtained by formally replacing $w_i(G)$ with $w(G)$ in $[w_1(G),\dots, w_k(G)]$. Then the subgroups $V_i$ are finite.
\end{lemma}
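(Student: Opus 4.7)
The plan is to reduce modulo the finite subgroup $w(G)'$, use the abelianness of $A := \overline{w(G)}$ and linearity from Lemma~\ref{normal abelian} to rewrite $\bar V_i$ as an abelian subgroup generated by commutators indexed by the finite set $\overline{G_w}$, and then combine this with centralization coming from Lemma~\ref{[u(G),G^n] = 1} to force $\bar V_i$ into the torsion subgroup of the finitely generated abelian group $A$.

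By Lemma~\ref{w(G)' finite}, $w(G)'$ is finite; since $V_i\cap w(G)' \subseteq w(G)'$, it suffices to prove $\bar V_i := V_i w(G)'/w(G)'$ is finite in $\bar G := G/w(G)'$. There $A$ is abelian and generated by the finite set $\overline{G_w}$, hence is finitely generated abelian. Lemma~\ref{normal abelian} applied to $A \lhd \bar G$ yields $[a_1a_2, x] = [a_1,x][a_2,x]$ for $a_j\in A$; iterated through the layered commutator defining $\bar V_i$, this shows $\bar V_i \subseteq A$ and that $\bar V_i$ is generated (as an abelian subgroup of $A$) by commutators $[\bar y_1,\dots,\bar y_{i-1},\bar a,\bar y_{i+1},\dots,\bar y_k]$ with $\bar a\in\overline{G_w}$ only. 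Writing $y = [\bar y_1,\dots,\bar y_{i-1}]$ and using additive notation on $A$, each such generator equals
\[
(\bar y_k - 1)(\bar y_{k-1} - 1)\cdots(\bar y_{i+1} - 1)(1 - y)\cdot \bar a,
\]
a signed sum of $\bar G$-conjugates of $\bar a$. Since $\overline{G_w}$ is $\bar G$-invariant of size at most $|G_w|$, the orbit of any $\bar a\in\overline{G_w}$ has at most $|G_w|$ elements and only finitely many distinct commutator values arise as the $\bar y_j$ vary, so $\bar V_i$ admits a finite generating set.

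Now apply Lemma~\ref{[u(G),G^n] = 1} to $u = [w_1,\dots,w_{k-1}]$ and $v = w_k$: as $v$ is non-commutator, we obtain an integer $n$ with $[[w_1,\dots,w_{k-1}](G), G^n] = 1$. Because $w(G)\subseteq [w_1,\dots,w_{k-1}](G)$, this also yields $[w(G), G^n]=1$, so $\bar G^n$ centralizes $A$. By Lemma~\ref{non-commutator > x^n} each $w_j$ satisfies $G^{n_j}\subseteq w_j(G)$; replacing $n$ by $n\prod_j n_j$ we may assume $\bar G^n \subseteq \bar w_j(\bar G)$ for all $j$ while retaining $[\bar G^n, A] = 1$. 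Moreover, for each decomposition $\bar a = [\bar b, \bar z]\in\overline{G_w}$ (with $\bar b$ a $[w_1,\dots,w_{k-1}]$-value and $\bar z$ a $w_k$-value), the centralization forces $[\bar b,\bar z^n] = 1$, which in the abelian $A$ unwinds to the relation $\sum_{j=0}^{n-1}\bar a^{\bar z^j} = 0$.

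Since $A$ is finitely generated abelian, $\bar V_i$ is finite if and only if it is torsion. The centralization ensures that the $\bar G$-action on $A$ factors through a torsion subgroup $H$ of $\mathrm{Aut}(A)$; by Minkowski's theorem on torsion subgroups of the automorphism group of a finitely generated abelian group, $H$ is finite. Each generator of $\bar V_i$ therefore lies in $I^{k-i+1}\cdot A$, where $I$ is the augmentation ideal of $\mathbb{Z}[H]$. The relations $\sum_j \bar a^{\bar z^j} = 0$ (for all decompositions $\bar a = [\bar b,\bar z]$) further constrain the $H$-module structure of $A$, and combined with the specific form of the iterated commutator (a signed sum of orbit elements whose coefficients sum to zero) should force each generator to be torsion, placing $\bar V_i$ inside the finite torsion subgroup of $A$. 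The main obstacle is precisely this last step: translating the array of annihilation relations and the $\bar G^n$-centralization into a uniform exponent bound on $\bar V_i$.
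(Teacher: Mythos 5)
Your reduction to the finitely generated abelian quotient $A=w(G)/w(G)'$ and the observation that $\bar V_i$ is generated by commutators with one entry running over the finite set $\overline{G_w}$ are both correct and match the paper's setup. But the second half of your argument (Minkowski's theorem, augmentation ideals, the annihilation relations $\sum_j \bar a^{\bar z^j}=0$) does not close, and you say so yourself: ``should force each generator to be torsion'' is exactly the step that is missing, and nothing in the machinery you have assembled produces a uniform exponent bound. There is a genuine gap here, not just an unfinished computation.

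The idea you are missing is much more elementary and makes all of that machinery unnecessary. Take a generator $[a,g_2,\dots,g_k]$ of $V_1$ with $a\in G_w$ (the other $V_i$ are analogous). Since $a$ lies in the abelian normal subgroup $w(G)$, Lemma~\ref{normal abelian} gives $[a,g_2,\dots,g_k]^{rn}=[a^{rn},g_2,\dots,g_k]$ for every integer $r$, where $n$ is chosen via Lemma~\ref{non-commutator > x^n} so that every $n$th power of an element of $G$ is a $w_1$-value. The point is that $a^{rn}=(a^r)^n$ is then itself a $w_1$-value, so $[a^{rn},g_2,\dots,g_k]$ is again a $w$-value for \emph{every} $r$. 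Since $|G_w|=m$, the pigeonhole principle forces $[a,g_2,\dots,g_k]^{(s-r)n}=1$ for some $0\leq r<s\leq m$, so every generator of $V_1$ has order dividing $m!\,n$. The elements of $A$ of order dividing $m!\,n$ form a finite normal subgroup of $G$ (as $A$ is finitely generated abelian), and $V_1$ is contained in it. Your appeal to Lemma~\ref{[u(G),G^n] = 1} and the centralization of $A$ by $\bar G^n$ is true but is simply not the mechanism that produces torsion; the mechanism is that powering a generator re-lands you inside the finite set $G_w$.
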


\begin{proof}
We will establish the finiteness of the subgroup $$V_1=[w(G), w_2(G), \dots, w_k(G)],$$ as  the other cases are analogous. In view of Lemma \ref{w(G)' finite} we can pass to the quotient  $G/w(G)'$ and assume that $w(G)$ is a finitely generated abelian subgroup. Let $n$ be a positive integer such that every $n$th power is a $w_1$-value in $G$. Choose $a\in G_w$ and $g_i \in G_{w_i}$ for $i = 2, \dots, k$. Observe that $[a, g_2, \dots, g_k]^{rn} = [a^{rn}, g_2, \dots, g_k]$, by Lemma \ref{normal abelian}, and the choice of $n$ guarantees that this is always a $w$-value. Let $|G_w| = m$. It follows that the order of $[a,g_2,\dots,g_k]$ divides $m!n$. As $w(G)$ is finitely generated and abelian, the subgroup $K$ of $w(G)$ consisting of all elements whose orders divide $m!n$ is a finite normal subgroup of $G$. For every choice of $a,g_2,\dots,g_k$ the element $[a,g_2,\dots,g_k]$ belongs to $K$. Therefore $V_1\leq K$, and so $V_1$ must be finite. 
\end{proof}

The next result provides a sufficient condition for conciseness of certain words. Throughout, $\langle X \rangle$ denotes the subgroup generated by a set $X$.

\begin{lemma}\label{first reduction argument}
Let $w=[u,v]$ for some word $u$ and a non-commutator word $v$. Let $G$ be a group in which the set $G_w$ is finite. Suppose that $w(G)^j$ centralizes $v(G)$ for some positive integer $j$. Then $w(G)$ is finite. 
\end{lemma}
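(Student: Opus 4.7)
The plan is to reduce to the case where $w(G)$ is abelian and then show that every generator of $w(G)$ has bounded order, so that commutativity plus finite generation plus bounded exponent forces finiteness. For the reduction, Lemma \ref{w(G)' finite} shows that $w(G)'$ is finite, so by Lemma \ref{aaa} one may pass to $G/w(G)'$; the hypotheses that $G_w$ is finite and that $[w(G)^j, v(G)] = 1$ are preserved. Thus I may assume that $w(G)$ is abelian, and since it is generated by the finite set $G_w$ it is a finitely generated abelian group.

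Next, I would invoke Lemma \ref{[u(G),G^n] = 1} to obtain a positive integer $n$ such that $[u(G), G^n] = 1$. Fix an arbitrary generator $c = [a,b]$ of $w(G)$ with $a \in G_u$ and $b \in G_v$. Iterating the standard identity $[a, yz] = [a,z][a,y]^z$, and using that $c$ lies in the abelian normal subgroup $w(G)$, one computes
$$[a, b^n] \;=\; \prod_{k=0}^{n-1} c^{b^k} \;=\; c^n \cdot \prod_{k=0}^{n-1} [c, b^k].$$

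The crucial observation is that each factor $[c, b^k]$ has order dividing $j$: since $c^j \in w(G)^j$ centralizes $v(G)$, we have $[c^j, b^k] = 1$, and Lemma \ref{normal abelian} yields $[c, b^k]^j = [c^j, b^k] = 1$. As $w(G)$ is finitely generated abelian, the $j$-torsion subgroup $w(G)[j]$ is finite, and being a subgroup it contains the product $\prod_{k=0}^{n-1}[c, b^k]$. On the other hand, $b^n \in G^n$ centralizes $u(G) \ni a$, so $[a, b^n] = 1$. Comparing with the displayed identity forces $c^n \in w(G)[j]$, hence $c^{nj} = 1$. Therefore every generator of $w(G)$ has order dividing $nj$, so $w(G)$ is a finitely generated abelian group of bounded exponent, and hence finite.

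The main conceptual point is the decomposition of $[a, b^n]$ in the abelianised setting as a power $c^n$ modulo an ``error term'' $\prod_{k}[c, b^k]$ that automatically lands in the finite subgroup $w(G)[j]$; once this is recognised, everything else is a routine structural argument. No sharper use of the non-commutator hypothesis on $v$ is needed beyond what has already been packaged into Lemma \ref{[u(G),G^n] = 1}.
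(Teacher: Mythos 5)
Your proof is correct, and it takes a genuinely different route from the paper's. The paper first passes to the quotient where $w(G)$ is torsion-free abelian, forms $K=w(G)\langle g\rangle$ for a $v$-value $g$, observes that $w(G)^j\langle g^n\rangle$ is a central subgroup of finite index, and invokes Schur's theorem to conclude $K'$ is finite, hence trivial, hence $K$ is abelian; this lets it compute $[h,g]^n=[h,g^n]=1$ for $h\in G_u$. You instead stay in the merely abelian (not torsion-free) setting and replace the Schur step by the explicit expansion $[a,b^n]=\prod_{k=0}^{n-1}c^{b^k}=c^n\prod_{k=0}^{n-1}[c,b^k]$ with $c=[a,b]$, then kill the error terms directly: each $[c,b^k]$ has order dividing $j$ because $[c,b^k]^j=[c^j,b^k]=1$ by the hypothesis $[w(G)^j,v(G)]=1$, while $[a,b^n]=1$ by Lemma \ref{[u(G),G^n] = 1}. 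Both arguments hinge on the same two inputs (the abelian reduction via Lemmas \ref{w(G)' finite} and \ref{aaa}, and the integer $n$ from Lemma \ref{[u(G),G^n] = 1}), but yours is more elementary --- no appeal to Schur's theorem and no need for the torsion-free reduction --- and it yields the explicit exponent bound $nj$ on the $w$-values, which makes the finiteness of the finitely generated abelian group $w(G)$ immediate. The one point worth stating explicitly (you assert it but do not check it) is that the hypothesis $[w(G)^j,v(G)]=1$ survives the passage to $G/w(G)'$; this is routine, since $\overline{w(G)}^{\,j}=\overline{w(G)^j}$ and $[\overline{w(G)^j},\overline{v(G)}]$ is the image of $[w(G)^j,v(G)]$.
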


\begin{proof} It follows from Lemma \ref{w(G)' finite} that the elements of finite order in $w(G)$ form a finite normal subgroup containing $w(G)'$. We can pass to the quotient over this subgroup and simply assume that $w(G)$ is torsion-free abelian.

Let $g$ be a $v$-value in $G$ and consider the subgroup $K = w(G) \langle g \rangle$. As $w(G)^j$ commutes with $g$, it is a central subgroup of $K$. Let $n$ be as in Lemma \ref{[u(G),G^n] = 1}. Since $g^n$ commutes with $u(G)$, it also commutes with $w(G)$. Therefore $w(G)^j \langle g^n \rangle$ is a central subgroup of finite index of $K$. By Schur's Theorem (\cite[10.1.4]{robinson}), $K'$ is finite. Since $K'$ is a subgroup of $w(G)$, which is torsion-free, $K'$ must be trivial and so $K$ is abelian. Let $n$ be as in Lemma \ref{[u(G),G^n] = 1} and choose $h\in G_u$. As both $g, g^h$ belong to $K$, we have $$[h,g]^n =[h, g^n] = 1.$$ Since $h$ and $g$ were chosen arbitrarily, conclude that all $w$-values of $G$ have finite orders. Hence, $w(G)$ is finite.
\end{proof}

The next result is immediate from \cite[Lemma 10]{dms1}.

\begin{lemma}\label{p-lemma}
Let $w = w(x_1, x_2, \dots, x_k)$ be a word, and let $G$ be a nilpotent group of class $c$ generated by $k$ elements $a_1, a_2,\dots,a_k$. Denote by $X$ the set of all conjugates in $G$ of elements of the form $w(a_1^i, a_2^i, \dots, a_k^i)$, where $i$ is an integer, and assume that $|X|=m$ for some integer $m$. Then the subgroup $\langle X \rangle$ has finite $(c,m)$-bounded order.
\end{lemma}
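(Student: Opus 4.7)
The plan is to set $H = \langle X \rangle$ and proceed by induction on the nilpotency class $c$ of $G$. First observe that $H$ is normal in $G$, because $X$ is a union of $G$-conjugacy classes, and that every $x \in X$ satisfies $|G : C_G(x)| \leq m$. For the base case $c = 1$, the group $G$ is abelian, conjugation is trivial, and $w(a_1^i, \dots, a_k^i) = w(a_1, \dots, a_k)^i$, since in an abelian group only the exponent sum of each variable matters. Hence $X$ consists of powers of a single element, and $|X| \leq m$ forces $|H| \leq m$.

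For the inductive step, I would pass to the quotient $\bar G = G / \gamma_c(G)$, which has nilpotency class at most $c - 1$. The image $\bar X$ of $X$ still satisfies the hypotheses with $|\bar X| \leq m$, so by induction $|H \gamma_c(G) / \gamma_c(G)|$ is $(c - 1, m)$-bounded. It then remains to bound $|H \cap \gamma_c(G)|$, which lies in $Z(G)$. To do so, I would expand $w(a_1^i, \dots, a_k^i)$ via Hall's collection process as a product of basic commutators in $a_1, \dots, a_k$ whose exponents are integer polynomials in $i$; the part lying in $\gamma_c(G)$ is a product of weight-$c$ basic commutators raised to such polynomial exponents. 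Since $|X| \leq m$, these polynomials, read modulo the orders of the corresponding commutators, take at most $m$ distinct values on $\mathbb{Z}$, which forces those orders to be $(c, m)$-bounded. Combined with the bounded number of basic commutators of weight $c$ in the fixed generators $a_1, \dots, a_k$, this yields the desired bound on $|H \cap \gamma_c(G)|$, closing the induction.

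The main obstacle is the polynomial-collision analysis in the last step: one must guarantee that the weight-$c$ polynomial exponents are not entirely absorbed by relations coming from earlier terms of the lower central series, so that the finiteness of $X$ genuinely produces torsion bounds on the top layer rather than being accounted for inside $\bar H$. Careful bookkeeping with Hall's collection formula, paired with the inductive control on $\bar H$, is what makes the argument go through; this is essentially the content of \cite[Lemma 10]{dms1}, from which the statement is derived.
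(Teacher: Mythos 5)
The paper offers no proof of this lemma at all (it is quoted as ``immediate from \cite[Lemma 10]{dms1}''), so your attempt has to stand on its own. Your framework is sound: the normality of $H=\langle X\rangle$, the base case $c=1$, and the reduction via $\bar G=G/\gamma_c(G)$ to bounding $|H\cap\gamma_c(G)|$ are all correct. But the decisive step is exactly the one you label ``the main obstacle'' and then wave away, and as written it does not work. From $|X|\leq m$ you only know that the \emph{full} product $\prod_j c_j(a_1,\dots,a_k)^{f_j(i)}$ takes at most $m$ values; you cannot isolate its weight-$c$ factor, because the lower-weight partial product is controlled only modulo $\gamma_c(G)$ --- its actual value in $G$ may wander over infinitely many elements of a fixed coset of $\gamma_c(G)$, and that drift is absorbed precisely into the central layer you are trying to bound. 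Moreover, even granting finiteness of the weight-$c$ part, the conclusion ``the orders of the corresponding commutators are $(c,m)$-bounded'' is false as stated: relations among the images of distinct basic commutators mean one can at best bound the subgroup generated by the actual central values, not the orders of the individual $c_j(a_1,\dots,a_k)$ (already for $w=x_1$ in a class-$2$ group, $[a_1,a_2]$ may have infinite order while $|X|\leq m$). Deferring to ``careful bookkeeping'' plus the citation leaves the proof incomplete at its only nontrivial point.

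A workable repair avoids separating basic commutators altogether. Since $X$ is a normal set of at most $m$ elements, each $x\in X$ has at most $m$ conjugates, so $|H:Z(H)|\leq m^m$ and $|H'|$ is $m$-bounded by the quantitative Schur theorem; passing to $G/H'$ one may assume $H$ is abelian. Hall's theory of polynomial maps gives that $\phi(i)=w(a_1^i,\dots,a_k^i)$ satisfies $\Delta^{c+1}\phi=1$, i.e.\ $\phi$ is a polynomial map of degree at most $c$ into the abelian group $H$ with $\phi(0)=1$. A finite-difference induction on the degree (each difference is a product of two values of the previous map, so its image stays bounded, and a degree-$0$ map with bounded image and a bounded power lying in the span of lower data generates a bounded cyclic group) shows that $|\phi(\mathbb{Z})|\leq m$ forces $\langle\phi(\mathbb{Z})\rangle$ to have $(c,m)$-bounded order; since $H$ is abelian and generated by $m$ conjugates of elements of $\phi(\mathbb{Z})$, each of bounded order, $|H|$ is $(c,m)$-bounded. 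This argument works directly with the values of $\phi$ rather than with the formal exponents of basic commutators, which is what sidesteps the absorption problem you identified but did not resolve.
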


\section{Proof of Theorem \ref{engel-like non-commutator}} 

If $x$ is an element of a group $G$ and $Y$ is a subset of $G$, we write $x^Y$ to denote the set of conjugates of $x$ by elements of $Y$. We say that a subset of $G$ is normal if it is invariant under every inner automorphism of $G$.

\begin{lemma}\label{finite K-conjugation classes} Let $G$ be a group, $Y$ a normal subset of $G$, and let $K = \langle Y \rangle$. Suppose that $x\in G$ is an element such that $x^{Y}$ is  finite. Assume further that for every $y\in Y$ there is a positive integer $m=m(y)$ such that $[x^K,y^m]=1$. Then the set $x^K$ is finite.
\end{lemma}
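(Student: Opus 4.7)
The plan is to work in the faithful quotient of $K$ acting on $x^K$ and build the orbit inductively starting from $\{x\}$. Set $L = C_K(x^K)$; since $x^K$ is $K$-invariant, $L$ is a normal subgroup of $K$, and the quotient $\bar K = K/L$ acts faithfully on $x^K$ by conjugation. The hypothesis $[x^K, y^{m(y)}] = 1$ means that the image $\bar y$ of each $y \in Y$ has finite order (dividing $m(y)$). Moreover, since $Y$ is $G$-normal (hence $K$-normal) and $L$ is normal in $K$, the set $\bar Y$ is a normal subset of $\bar K$ that generates $\bar K$.

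A short computation using the normality of $Y$ shows that $|z^Y| = |x^Y| =: N$ for every $z \in x^K$: writing $z = x^k$ and using $Y^k = Y$, one has $z^Y = (x^k)^Y = x^{kY} = x^{Yk} = (x^Y)^k$. Consequently, setting $F_0 = \{x\}$ and $F_{n+1} = F_n \cup F_n^Y$, each $F_n$ is finite (indeed $|F_{n+1}| \leq (N+1)|F_n|$) and $x^K = \bigcup_n F_n$. Since any $Y$-invariant subset of $x^K$ containing $x$ is automatically $\langle Y \rangle = K$-invariant and hence equal to $x^K$, it suffices to show that the chain $F_0 \subseteq F_1 \subseteq \cdots$ stabilizes.

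To force stabilization, I would invoke Dietzmann's lemma inside $\bar K$: a finite normal subset of torsion elements generates a finite subgroup, so if $\bar Y$ is finite then $\bar K$ is finite and hence $|x^K| = [\bar K : \bar D] < \infty$, where $\bar D = C_{\bar K}(x)$. Since $\bar Y$ is covered by $N$ right cosets of $\bar D$, finiteness of $\bar Y$ reduces to finiteness of $\bar D$. The group $\bar D$ permutes the finite set $x^{\bar Y}$ via the identity $(x^{\bar y})^{\bar d} = x^{\bar y^{\bar d}}$ (which uses normality of $\bar Y$), so a finite-index subgroup $\bar D_1$ of $\bar D$ fixes $F_1 = \{x\} \cup x^{\bar Y}$ pointwise. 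Iterating produces a descending chain $\bar D \geq \bar D_1 \geq \bar D_2 \geq \cdots$ with $\bar D_n$ fixing $F_n$ pointwise, each step of finite index, and $\bigcap_n \bar D_n = 1$ by faithfulness of the action on $x^K$. The main obstacle is to leverage the combinatorial interplay of these constraints — normality of $\bar Y$, its distribution in only $N$ cosets of $\bar D$, and the torsion of its elements — to conclude that $\bar D$ is genuinely finite, after which Dietzmann's lemma closes the argument.
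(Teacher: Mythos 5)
Your setup is sound as far as it goes: the reduction to the faithful action of $\bar K=K/C_K(x^K)$, the observation that $|z^Y|=|x^Y|=N$ for every $z\in x^K$, and the remark that $Y$-invariance of a subset of $x^K$ already gives $K$-invariance (because each $\bar y$ is torsion, so $y^{-1}$ acts as $y^{m-1}$) are all correct. But the proof is not complete, and you say so yourself: the entire content of the lemma is concentrated in the step you label ``the main obstacle,'' namely showing that $\bar D=C_{\bar K}(x)$ is finite. A descending chain $\bar D\geq\bar D_1\geq\bar D_2\geq\cdots$ of finite-index subgroups with $\bigcap_n\bar D_n=1$ only shows that $\bar D$ is residually finite; it does not show that the chain stabilizes or that $\bar D$ is finite (compare $\mathbb Z\supseteq 2\mathbb Z\supseteq 4\mathbb Z\supseteq\cdots$). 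Likewise, Dietzmann's lemma cannot be invoked until you know $\bar Y$ is finite, and the hypotheses give you only that $x^Y$ is finite; since $\bar Y$ sits in $N$ cosets of $\bar D$, its finiteness is exactly equivalent to the finiteness of $\bar D$ that you have not established. So the argument is circular at the decisive point.

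The idea you are missing is a normal-form argument for conjugates, which is how the paper proceeds. Fix $y_1,\dots,y_k\in Y$ with $x^Y=\{x^{y_1},\dots,x^{y_k}\}$. Using torsion modulo $C_G(x^K)$ to dispose of inverses, any $g\in K$ acts on $x$ as a product $g_1\cdots g_r$ of elements of $Y$; since $Y$ is a normal subset and $x^{g_1}=x^{y_i}$ for some $i$, one can push $y_i$ to the right (replacing the remaining $g_j$ by their conjugates, which stay in $Y$) and induct on $r$, so that $x^g=x^{y_{i_1}\cdots y_{i_r}}$ with all conjugators taken from the fixed finite list $y_1,\dots,y_k$. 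Choosing this expression minimal with respect to a suitable well-order on formal products forces the indices to be non-increasing, and the fact that some power of each $y_i$ centralizes $x^K$ bounds the number of times each $y_i$ can occur. Hence only finitely many conjugates $x^g$ arise. If you want to keep your framework, this is precisely the mechanism that makes your chain $F_0\subseteq F_1\subseteq\cdots$ stabilize; without it the proposal does not prove the lemma.
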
 
\begin{proof} Write $x^Y=\{x^{y_1},\dots, x^{y_k}\}$ for $y_1,\dots,y_k\in Y$. Any element $g\in K$ can be written as a product of elements from $Y\cup Y^{-1}$. Since for every $y\in Y$ there is a positive integer $m$ such that $[x^K,y^m]=1$, it follows that $g$ can be written as $g=cg_1\dots g_r$, where $c\in C_G(x^K)$ and $g_1,\dots,g_r\in Y$. 

We claim that $x^{g} = x^{y_{i_1} y_{i_2} \dots y_{i_r}}$ for some $y_{i_j}$ in $\{{y_1},\dots,{y_k}\}$. If $r = 1$, the result is clear so assume that $r\geq2$. We know that $x^{g_1}=x^{y_i}$ for some $1 \leq i \leq k$. Therefore we can write $x^g$ as $$x^{g_1g_2\dots g_r} = x^{y_ig_2\dots g_r} = x^{g_2'\dots g_r' y_i},$$ where $g_j' = y_i g_j y_i^{-1}$ also belongs to $Y$ since $Y$ is a normal subset of $G$. The induction hypothesis applied to $x^{g_2'\dots g_r'}$ establishes the claim. 

Now, define a well-order in the set of all formal products of the elements $\{y_1, y_2, \dots, y_k\}$ by the following rule: we say that $y_{i_1}y_{i_2}\dots y_{i_r}$ $<$ $y_{i'_1}y_{i'_2}\dots y_{i'_l}$ provided that $r < l$ or $r = l$ and there exists a positive integer $1 \leq t \leq r$ such that $i_{t} < i'_{t}$ and $i_n = i_n'$ for all $t < n \leq r$. As we have seen above, for any $g\in K$ there are $y_{i_1} y_{i_2} \dots y_{i_r}\in Y$ such that $x^g$ equals $x^{y_{i_1} y_{i_2} \dots y_{i_r}}$. We can choose the product $y_{i_1} y_{i_2} \dots y_{i_r}$ to be the smallest, in the sense of the order relation defined above. We claim that $i_1 \geq i_2 \geq \dots \geq i_r$. Assume that $i_{k}<i_{k+1}$ for some $k$. Then the product $y_{i_1}  \dots y_{i_{k-1}}y_{i_k}y_{i_{k+1}}y_{i_{k+2}} \dots y_{i_r}$ can be rewritten in the form $y_{i_1}  \dots y_{i_{k-1}}s y_{i_k}y_{i_{k+2}} \dots y_{i_r}$, where $s = y_{i_{k }}y_{i_{k+1}}y_{i_{k}}^{-1}$ also belongs to $Y$. From the previous paragraph we know that $x^{y_{i_1}  \dots y_{i_{k-1}}s}  = x^{y_{i'_1}  \dots y_{i'_{k-1}}y_{i'_k}}$, where the $y_{i_j'}$ belong to $\{y_1, y_2, \dots, y_k\}$. Since $y_{i'_1}  \dots y_{i'_{k-1}}y_{i'_k}y_{i_k}y_{i_{k+2}} \dots y_{i_r}$ is smaller than $y_{i_1}  \dots y_{i_k}y_{i_{k+1}}y_{i_{k+2}} \dots y_{i_r}$, we obtain a contradiction. We conclude that $x^g$ equals $x^{y_{i_1} y_{i_2} \dots y_{i_r}}$, with decreasing indexes. The fact that some power of $y_{i}$ centralizes $x^K$ implies that each $y_i$ occurs only finitely many times. Hence, $|x^K|$ is finite.\end{proof}

Recall that an FC-group is a group in which all conjugacy classes are finite. The next lemma is well-known (see for example \cite[Theorem 4.32]{robinson-finiteness-I}).

\begin{lemma}\label{G' periodic in FC-groups}
Let $G$ be an FC-group. The elements of finite order in $G$ form a subgroup containing $G'$.
\end{lemma}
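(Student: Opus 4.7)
The plan is to denote by $T$ the set of torsion elements of $G$ and to deduce both conclusions from a single structural fact: every finitely generated subgroup of $G$ has finite derived subgroup. The starting observation is that the FC property is inherited by subgroups, since for any $K\leq G$ and any $x\in K$ the $K$-orbit $x^K$ is contained in the finite set $x^G$.

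The key auxiliary statement I would establish is: if $K=\langle g_1,\dots,g_n\rangle$ is a finitely generated FC-group, then $K'$ is finite. Each centralizer $C_K(g_i)$ has finite index in $K$, since the orbit $g_i^K$ is finite, so the centre $Z(K)=\bigcap_{i=1}^{n} C_K(g_i)$ has finite index in $K$. Schur's Theorem (\cite[10.1.4]{robinson}) then forces $K'$ to be finite.

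With this in hand, both parts follow quickly. To see that $T$ is closed under multiplication, take $x,y\in T$ and set $K=\langle x,y\rangle$. By the auxiliary statement $K'$ is finite, and $K/K'$ is a finitely generated abelian group generated by the torsion images of $x$ and $y$; hence $K/K'$ is finite, and therefore so is $K$. In particular $xy^{-1}\in K$ has finite order, so $xy^{-1}\in T$. For the containment $G'\leq T$, given any $g,h\in G$, apply the same statement to $K=\langle g,h\rangle$: the commutator $[g,h]$ lies in the finite subgroup $K'$ and hence has finite order. Since $T$ is a subgroup and $G'$ is generated by such commutators, $G'\leq T$.

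I do not anticipate any substantive obstacle here; the whole argument rests on recognising that a finitely generated FC-group is centre-by-finite, after which Schur's Theorem immediately delivers the finite commutator subgroup needed to conclude both assertions.
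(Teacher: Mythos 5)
Your proof is correct and complete: the reduction to finitely generated subgroups, the observation that a finitely generated FC-group is centre-by-finite, and the application of Schur's Theorem together give both the subgroup property of the torsion set and the containment $G'\leq T$. The paper itself offers no proof, only a citation to \cite[Theorem 4.32]{robinson-finiteness-I}, and your argument is essentially the standard one found there, so there is nothing to add.
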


An element $a$ of a group $G$ is called a (left) Engel element if for any $b\in G$ there exists $k=k(a,b)\geq 1$ such that $[b,_ka]=1$. Here we use the abbreviation $[b,\,{}_ka]:=[b,a,a,\dots,a]$ where $a$ is repeated $k$ times. The element $a$ is $k$-Engel if $[b,_ka]=1$ for all $b\in G$. The following result is due to Gruenberg (see \cite{grue} or \cite[Theorem 12.3.3]{robinson}). 

\begin{lemma}\label{gruenberg}
Let $G$ be a soluble group generated by finitely many Engel elements. Then $G$ is nilpotent.
\end{lemma}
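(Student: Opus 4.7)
The plan is to induct on the derived length $d$ of $G$, noting first that $G$ is automatically finitely generated (being generated by finitely many Engel elements). When $d=1$ the group is abelian and there is nothing to prove. For the inductive step, let $A=G^{(d-1)}$ denote the last nontrivial term of the derived series, so that $A$ is a normal abelian subgroup of $G$. The images of the Engel generators in $G/A$ remain left Engel, so the inductive hypothesis applied to $G/A$ yields that $G/A$ is nilpotent; hence it suffices to show that $A$ is contained in some term of the upper central series of $G$, i.e.\ that each generator $a_i$ acts nilpotently on $A$ by conjugation with a uniform exponent $n_i$ satisfying $[A,\,{}_{n_i}a_i]=1$.

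The Engel hypothesis only supplies, for each individual $x\in A$, an integer $n=n(x,a_i)$ with $[x,\,{}_n a_i]=1$; the real work is to upgrade this pointwise nilpotence to a uniform one. To this end I would view $A$ as a module over the integral group ring $\mathbb{Z}[G/A]$. Since $G$ is finitely generated and $G/A$ is polycyclic (being finitely generated nilpotent), a standard argument shows that $A$ is finitely generated as a $\mathbb{Z}[G/A]$-module. Conjugation by $a_i$ then corresponds to multiplication by $\bar a_i$, and the commutator $[x,a_i]$ to multiplication by $\bar a_i-1$; the pointwise Engel condition becomes the statement that $\bar a_i-1$ acts locally nilpotently, and finite generation of $A$ as a module allows one to extract a common exponent $n_i$ annihilating a generating set and therefore all of $A$.

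Once such exponents $n_i$ are in hand, we splice the upper central series of $G/A$ with the filtration of $A$ afforded by the joint action of the generators to obtain a finite central series for $G$, and conclude that $G$ is nilpotent. The main obstacle is precisely this local-to-uniform upgrade on the module $A$: the finite generation of $A$ over the polycyclic group ring $\mathbb{Z}[G/A]$—a non-obvious structural input, essentially P.\ Hall's theorem on finitely generated soluble groups—is what powers the passage from pointwise to uniform nilpotence and carries the true content of Gruenberg's theorem.
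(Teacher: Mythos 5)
The paper does not prove this lemma at all: it is quoted as Gruenberg's theorem with references to \cite{grue} and \cite[Theorem 12.3.3]{robinson}, where the actual statement proved is that in a soluble group the left Engel elements coincide with the Hirsch--Plotkin radical (so a subgroup generated by finitely many Engel elements is a finitely generated subgroup of a locally nilpotent group, hence nilpotent). Your sketch sets up the right framework (induction on derived length, $A=G^{(d-1)}$ viewed as a $\mathbb{Z}[G/A]$-module, finite generation of $A$ as a module via Hall's theory of finitely generated abelian-by-polycyclic groups), but the two steps that carry the weight are not correct as written.

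First, the local-to-uniform upgrade. You claim that if $(\bar a_i-1)^{n_i}$ annihilates a finite module generating set $v_1,\dots,v_r$ of $A$, it annihilates all of $A$. This fails because $\bar a_i-1$ is not central in $\mathbb{Z}[G/A]$: for $\lambda\in\mathbb{Z}[G/A]$ one has $(v_j\lambda)(\bar a_i-1)^{n_i}=v_j\,\lambda(\bar a_i-1)^{n_i}$, and $\lambda(\bar a_i-1)^{n_i}$ cannot be rewritten as $(\bar a_i-1)^{n_i}\lambda$. Concretely, $g(\bar a_i-1)^{n}=(\bar a_i^{\,g^{-1}}-1)^{n}g$, so killing $v_jg$ requires a uniform Engel exponent of $v_j$ against \emph{every conjugate} of $a_i$ --- infinitely many conditions, each of which the Engel hypothesis satisfies only with its own exponent. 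The gap is merely relocated, not closed. Second, the reduction itself is too optimistic: even granting $[A,\,{}_{n_i}a_i]=1$ for each generator with uniform $n_i$, the conclusion $A\leq Z_m(G)$ requires the whole augmentation ideal of $\mathbb{Z}[G/C_G(A)]$ to act nilpotently, and a ring generated by nilpotent elements need not be nilpotent (think of the matrix units $e_{12},e_{21}$ with $e_{12}^2=e_{21}^2=0$ but $(e_{12}e_{21})^m\neq0$); ruling this out must use the nilpotency of $G/A$ in an essential way, which your ``splicing'' of series does not do. The standard proofs circumvent both problems differently, e.g.\ by first proving the one-generator case ($[\cdot,x]$ is an endomorphism of the abelian normal subgroup $A$, so a finitely generated $\langle x\rangle$-invariant subgroup of $A$ is annihilated by a bounded power of it) and then invoking Baer-type theorems for groups with max-$n$, which is where Hall's finiteness results actually enter. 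As it stands, your argument is a plausible outline with the genuinely hard steps asserted rather than proved.
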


The following lemma will be helpful (cf \cite[Lemma 9]{dms1}).

\begin{lemma}\label{semidirect product nilpotence}
Let $G=H\langle a \rangle$ be a product of a nilpotent normal subgroup $H$ and a cyclic subgroup $\<a\>$. If $[H,_na]=1$, then $G$ is nilpotent.
\end{lemma}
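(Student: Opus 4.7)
The plan is to induct on the nilpotency class $c$ of $H$. The recurring tool is the following simple observation: if $N\leq H$ is normal in $G$ and centralized by $H$, then for every $n\in N$ and $g=h a^m\in G$ one has $[n,g]=[n,a^m]$, so $[N,G]=[N,\langle a\rangle]$; and if moreover $N$ is abelian, Lemma \ref{normal abelian} allows one to collapse this to $[N,a]$, which is again normal in $G$ and centralized by $H$. Thus once the ``centralization by $H$'' hypothesis is established for a term of the lower central series, the entire descent below is governed by iterated commutators with $a$ alone.

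In the base case $c=1$, $H$ is abelian, so every normal subgroup of $G$ contained in $H$ is automatically centralized by $H$. A direct expansion of $[h_1 a^{k_1}, h_2 a^{k_2}]$ together with Lemma \ref{normal abelian} shows $\gamma_2(G)\leq [H,a]$, and then the observation above yields $\gamma_{k+1}(G)\leq [H,_k a]$ for every $k\geq 1$ by induction on $k$. Taking $k=n$ gives $\gamma_{n+1}(G)\leq [H,_n a]=1$, so $G$ is nilpotent of class at most $n$.

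For the inductive step, let $Z=Z(H)$. Since $Z$ is characteristic in $H$, it is normal in $G$, and the quotient $G/Z=(H/Z)\langle aZ\rangle$ satisfies the hypotheses with $H/Z$ of nilpotency class $c-1$ and $[H/Z,_n aZ]=1$. By induction $G/Z$ is nilpotent of some class $d$, so $\gamma_{d+1}(G)\leq Z$. Now $Z$ is abelian and $[Z,H]=1$, so the key observation applies to $\gamma_{d+1}(G)$ and to all the subgroups it generates; iterating yields $\gamma_{d+1+k}(G)\leq [Z,_k a]\leq [H,_k a]$ for every $k\geq 0$, and taking $k=n$ gives $\gamma_{d+n+1}(G)=1$.

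The principal obstacle is that $\langle a\rangle$ need not be normal in $G$, so generic commutators genuinely mix $H$-parts with powers of $a$ and one cannot reason as if the lower central series were confined to $H$ from the start. The purpose of inducting on $c$ is precisely to push the lower central series into a layer (namely $H$ itself when $c=1$, and $Z(H)$ in the inductive step) where centralization by $H$ becomes available, after which the iterated-commutator-with-$a$ machinery takes over and the $n$-Engel hypothesis finally terminates the descent.
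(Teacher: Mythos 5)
Your proof is correct. The paper does not actually prove this lemma --- it only cites \cite[Lemma~9]{dms1} --- and your argument (induction on the nilpotency class of $H$, pushing the lower central series of $G$ into $Z(H)$ and then using that a normal subgroup centralized by $H$ satisfies $[N,G]=[N,a]$ to let the $n$-Engel hypothesis terminate the descent) is the standard self-contained way to establish it, matching the spirit of the cited source.
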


It will be convenient to prove Theorem \ref{engel-like non-commutator} first in the special case where $w = [x_1^n, \dots, x_s^n]$. 
\begin{lemma}\label{powers}
Let $n$ and $s$ be positive integers and let $G$ be a group where the word $[x_1^n, \dots, x_s^n]$ takes only  finitely many values. Then $w(G)$ is finite. 
\end{lemma}
\begin{proof}
Denote the word $[x_1^n, \dots, x_{s-1}^n]$ by $u_0$ and the word $x_s^n$ by $v_0$.  Let $a$ be a $u_0$-value in $G$ and $b$ a $v_0$-value. The set $$a^{G_{v_0}} = \{a[a,b] \, | \, b \in G_{v_0}\}$$ is finite and moreover $a^{G_{v_0}}\subseteq  aG_w$. Note that $G_{v_0}$ is a normal subset of $G$ and, by Lemma \ref{[u(G),G^n] = 1}, for some positive integer $e$ we have $[u_0(G),G^e]=1$.  Lemma \ref{finite K-conjugation classes} guarantees that $a^{v_0(G)}$ is finite. In particular, we also have that $a^{u_0(G)}$ is finite, since $u_0(G) \leq v_0(G)$. As the $u_0$-values generate $u_0(G)$ and have finite $u_0(G)$-conjugacy classes, $u_0(G)$ is an FC-group. Applying Lemmas \ref{G' periodic in FC-groups} and \ref{aaa}, we can pass to the quotient $G/u_0(G)'$ and assume that $u_0(G)$ is abelian. Moreover, we can pass to the quotient over the maximal torsion subgroup of $u_0(G)$ and without loss of generality assume that $u_0(G)$ is torsion-free, again by Lemma \ref{aaa}. 

Choose $g_1,\dots,g_s\in G$ with $g_1\in u_0(G)$. Since $u_0(G)$ is abelian, by Lemma \ref{normal abelian} for any integer $r$ we have $[g_1,g_2^n, \dots,g_s^n]^{rn}=[g_1^{rn},g_2^n, \dots, g_s^n]$ $\in G_w$.  The finiteness of the set of $w$-values in $G$, implies that the element $[g_1,g_2^n, \dots,g_s^n]$ has finite order. This happens for every choice of $g_1,\dots,g_s$. As $w(G)$ is torsion free, $[g_1,g_2^n, \dots,g_s^n] = 1$, and so $v_0(G)=G^n$ is nilpotent of class at most $2s-2$. We are now in a position to apply Lemma \ref{p-lemma}. Take arbitrary elements $a_1, \dots, a_s\in G$ and write $b_i$ for $a_i^n$. Observe that $\langle b_1, b_2, \dots, b_s \rangle$ is a nilpotent subgroup of $G$. Let $\gamma$ denote the word $[y_1,y_2,\dots,y_s]$. Clearly, $\gamma(b_1^i, b_2^i,\dots,b_s^i)=[a_1^{in}, a_2^{in}, \dots, a_s^{in}]$ is a $w$-value for any given integer $i$. Therefore the set $$\{\gamma(b_1^i, b_2^i, \dots, b_s^i) \, | \, i\in\Bbb Z\}$$ is finite. Lemma \ref{p-lemma} tells us that $[b_1,\dots,b_s]$ has finite order. As $w(G)$ is torsion-free, conclude that $w(G)=1$. This completes the proof of the theorem for the word $[x_1^n,\dots,x_s^n]$.
\end{proof}

We can now prove Theorem \ref{engel-like non-commutator}, which we restate here for the reader's convenience.

\medskip
 {\sc Theorem \ref{engel-like non-commutator}.} {\it Let $u=u(x_1,\dots,x_k)$ and $v=v(y_1,\dots,y_l)$ be non-commutator words. For $i=1,2,\dots,s$ set $u_i=u(x_{1i},\dots,x_{ki})$. Then both words $[u_1,\dots,u_s]$ and $[v,u_1,\dots,u_s]$ are concise.}

\begin{proof} Let the word $w$ be either $[u_1,\dots,u_s]$ or $[v,u_1,\dots,u_s]$, and let $G$ be a group in which $G_w$ is finite. We need to show that $w(G)$ is finite, too. By virtue of of Lemmas \ref{w(G)' finite} and \ref{aaa} without loss of generality we may assume that $w(G)$ is torsion-free abelian.

Suppose first that $w$ is the word $[u_1, u_2, \dots, u_s]$, where $u$ is an arbitrary non-commutator word. Note that $w(G)=\gamma_s(u(G))$ is the $s$th term of the lower central series of $u(G)$. Since $w(G)$ abelian, $u(G)$ is soluble. Let $a$ be any $u$-value in $G$. By Lemma \ref{[W, W_2, ..., W_k] = 1} we have  $[w(G),_{s-1}a]=1$. For any $g\in G$ the element $[g,a]$ belongs to $u(G)$ and $[g,_sa]$ belongs to $w(G)$. It follows that $[g,_{2s-1}a]=1$, and this shows that all $u$-values are $(2s-1)$-Engel in $G$. Lemma \ref{gruenberg} now implies that any subgroup of $G$ generated by finitely many $u$-values is nilpotent. In particular, $u(G)$ is locally nilpotent. The set of elements of finite order in a locally nilpotent group is a torsion subgroup. So $u(G)$ contains a characteristic torsion subgroup $L$ such that $u(G)/L$ is torsion-free. Thus, we pass to the quotient $G/L$ and assume that $u(G)$ is torsion-free.

According to Lemma \ref{non-commutator > x^n} there is a positive integer $n$ such that $g^n$ is a $u$-value for all $g\in G$. All values of the word $w_0 = [x_1^n, x_2^n, \dots, x_s^n]$ are $w$-values, so $|G_{w_0}|$ is finite. We conclude that $w_0(G)$ is finite, by Lemma \ref{powers}. Since $w(G)$ is torsion-free, $w_0(G)=1$. Thus, $G^n$ is nilpotent of class at most $s-1$.

Let $\beta$ be the word $[u_1, u_2, \dots, u_{s-1}]$. We can rewrite the word $[u_1, u_2, \dots, u_{s-1}, x^n]$ as $(x^{-n})^\beta x^n$. Let $g$ be an $n$th power in $G$ and $h$ a $\beta$-value. Observe that the subgroup $\langle g,g^h \rangle$ is nilpotent. If $\al = \al(x,y)$ is the word $x^{-1}y$, note that the $\al$-values $$\al((g^h)^i, g^i) = (g^h)^{-i}g^{i} = [h, g^i]$$ are $w$-values for every $i$. Lemma \ref{p-lemma} ensures that all elements of the form $[h,g]$ are torsion. Since $u(G)$ is torsion-free, conclude that $[u_1(G),u_2(G),\dots,u_{s-1}(G),G^n]=1$.

Now choose $a\in G_u$ and consider the subgroup $K=\langle a,a^h\rangle$, where $h$ is as above. Since $a^n$ commutes with $h$, it follows that $a^n\in Z(K)$ and so $K$ is central-by-finite. According to Schur's theorem $K'$ is finite. Taking into account that $u(G)$ is torsion-free, we deduce that $K$ is abelian. Now we have $(a^{-1}a^h)^n=a^{-n}(a^n)^h=1$. Since $u(G)$ is torsion-free, it follows that $a=a^h$. This happens for every $u$-value $a$ and every $\beta$-value $h$. Therefore $w(G)=1$. This completes the proof in the case where $w$ is the word $[u_1, u_2, \dots, u_s]$.

Now, assume that $w = [v, u_1, u_2, \dots, u_s]$. Let us show that $T=u(G)\cap v(G)$ is nilpotent. Choose $u$-values $a_1, \dots, a_{2s}$ and a $v$-value $b$. Note that $[b,a_1, \dots,a_s]$ is a $w$-value and therefore, in view of Lemma \ref{[W, W_2, ..., W_k] = 1}, we have $[b,a_1,\dots,a_{2s}]=1$, as $w(G)$ is torsion-free. Let $t_1, t_2, \dots$, $t_{2s+1}$ be arbitrary elements in $T$. Since $t_1$ belongs to $v(G)$, it can be written as a product of finitely many $v$-values. Similarly, for $2 \leq i \leq 2s+1$ each $t_i$ is as a product of $u$-values. The commutator $[t_1,t_2,\dots,t_{2s+1}]$ equals the product of finitely many commutators of the form $[b,a_1,\dots,a_{2s}]$, where $b\in G_v$ and $a_i\in G_u$. All those commutators are trivial and this proves that $T$ is nilpotent. 

Let $a \in G_u$ and note that $[w(G), _s a] = 1$. As $T\leq v(G)$, the subgroup $[T,_sa]$ is contained in $[v(G),_sa]$, which is contained in $w(G)$. Therefore $[T, _{2s}a]=1$, that is, $a$ acts on $T$ as a left $2s$-Engel element. Set $K_0=T\langle a\rangle$. By Lemma \ref{semidirect product nilpotence} the subgroup $K_0$ is nilpotent. We remark that $w(G)$ is contained in $T$. Let $d$ be a value of the word $[v,u_1,\dots,u_{s-1}]$. Since $a$ and $[d,a]$ belong to $K_0$, both $a$ and $a^d$ belong to $K_0$ and so the subgroup $S=\langle a,a^d\rangle$ is nilpotent. Lemma \ref{[u(G),G^n] = 1} tells us that there is a positive integer $n$  such that $a^n$ commutes with $d$. Observe that $a^n\in Z(S)$ and so $S$ is central-by-finite. According to Schur's theorem the commutator subgroup $S'$ is finite. Note that $S'\leq w(G)$. Since $w(G)$ is torsion-free, we conclude that $S$ is abelian. It follows that the $w$-value $a^{-d}a$ has finite order dividing $n$ since $(a^{-d}a)^n=a^{-nd}a^n=[d,a^n]=1$. Taking into account that $w(G)$ is torsion-free deduce that $[d,a]=1$ and so $w(G)=1$. The proof is complete. 
\end{proof}

\section{Proof of Theorem \ref{3 non-commutators}} 

Let $u_1, u_2$, and $u_3$ be non-commutator words, and let $w = [u_1, u_2, u_3]$. Then of course $w(G)=[u_1(G),u_2(G),u_3(G)]$. Let $G$ be a group in which $|G_w|=m$ is finite. We want to show that $w(G)$ is finite. Because of Lemmas \ref{w(G)' finite} and \ref{aaa} without loss of generality we may assume that $w(G)$ is torsion-free abelian. 

\begin{lemma}\label{[W_1, W_2, W] = 1} 
Let $A$ be an abelian normal subgroup of $G$, and assume that $A$ is contained in $u_i(G)$ for some $i$. The subgroup obtained by replacing $u_i(G)$ with $A$ in $[u_1(G),u_2(G),u_3(G)]$ is trivial. 
\end{lemma}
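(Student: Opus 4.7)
The plan is to prove the three cases ($i = 1, 2, 3$) by the same power-moving trick, and I would present case $i = 1$, that $[A, u_2(G), u_3(G)] = 1$, in full; the remaining cases are entirely analogous. My first step is to reduce to showing that $[a, b, c] = 1$ for every $a \in A$, $b \in G_{u_2}$ and $c \in G_{u_3}$. Once this is established, one extends to the full subgroup $[A, u_2(G), u_3(G)]$ by the usual inductions on the word lengths of $b$ in $u_2(G)$ and $c$ in $u_3(G)$, using $[a, b_1 b_2] = [a, b_2][a, b_1]^{b_2}$, the identity $[x^g, y] = [x, y^{g^{-1}}]^g$, and the fact (from Lemma \ref{normal abelian}) that the commutator into the abelian normal subgroup $A$ is additive in its first argument.

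For the crucial basic step, since $A$ is abelian and normal the elements $[a, b]$ and $[a, b, c]$ both lie in $A$, so two applications of Lemma \ref{normal abelian} give
\[
[a, b, c]^n = [a^n, b, c] \qquad \text{for every integer } n.
\]
By Lemma \ref{non-commutator > x^n} there is $n_1 \geq 1$ such that every $n_1$-th power in $G$ is a $u_1$-value, hence $a^{n_1 r} = (a^r)^{n_1} \in G_{u_1}$ for every $r$, and therefore $[a, b, c]^{n_1 r} = [a^{n_1 r}, b, c] \in G_w$. As $|G_w|$ is finite, two of these powers must coincide, producing a non-zero integer $N$ with $[a, b, c]^N = 1$; since $w(G)$ is torsion-free abelian by assumption, we conclude $[a, b, c] = 1$.

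For $i = 2$ and $i = 3$ the same argument applies, with the power $a^n$ moved to the second or third slot. In case $i = 2$ one uses $[b, a^n, c] = [b, a, c]^n$ (from Lemma \ref{normal abelian} together with $[b, a] \in A$), and in case $i = 3$ one uses $[b, c, a^n] = [b, c, a]^n$; the role of $u_1$ is then taken by $u_2$ or $u_3$ via Lemma \ref{non-commutator > x^n}, and the rest of the argument goes through unchanged. The main point requiring care is checking in each configuration that the relevant intermediate commutator lies in $A$, so that Lemma \ref{normal abelian} may be applied twice; once this bookkeeping is done the finiteness of $G_w$ and the torsion-freeness of $w(G)$ drive the argument uniformly, and I do not anticipate any serious obstacle.
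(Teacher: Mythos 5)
Your proposal is correct and follows essentially the same route as the paper: move the $rn$-th power onto the entry coming from $A$ via Lemma \ref{normal abelian}, note that the resulting commutator is a $w$-value by Lemma \ref{non-commutator > x^n}, and conclude from the finiteness of $G_w$ and the torsion-freeness of $w(G)$ that each generator is trivial. The paper likewise treats only the case $[A,u_2(G),u_3(G)]$ in detail and declares the other cases analogous.
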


\begin{proof}
We only deal with the subgroup $[A,u_2(G),u_3(G)]$, as the other results follow in a similar way. The word $u_1$ is a non-commutator so, by Lemma \ref{non-commutator > x^n}, we have $G^n\leq u_1(G)$ for some $n$. Let $a\in A$, $g_2\in G_{u_2}$, and $g_3\in G_{u_3}$. The commutator $[a, g_2, g_3]^{rn}$ equals $[a^{rn}, g_2, g_3]$ for any integer $r$. This is a $w$-value. As $G_w$ is finite, we conclude that $[a,g_2, g_3]$ has finite order. Since $w(G)$ is torsion-free, $[a, g_2, g_3]=1$. Hence the result.
\end{proof} 

\begin{lemma}\label{[W_1, W_2, G^n] = 1}
There exists a positive integer $n$ such that the subgroups $$[G^n,u_2(G), u_3(G)], [u_1(G), G^n, u_3(G)],\text{ and } [u_1(G), u_2(G), G^n]$$ are trivial. Moreover, the subgroup $G^n$ is contained in $u_i(G)$ for each $i=1,2,3$.
\end{lemma}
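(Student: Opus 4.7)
\emph{Proof plan.} Let $m=|G_w|$. By Lemma \ref{non-commutator > x^n} applied to each $u_i$ there exist integers $n_i$ with $G^{n_i}\leq u_i(G)$, so the ``moreover'' clause is realized by any common multiple of $n_1,n_2,n_3$. Applying Lemma \ref{[u(G),G^n] = 1} to $w=[[u_1,u_2],u_3]$ with $u=[u_1,u_2]$ and $v=u_3$, and noting that $[u_1,u_2](G)=[u_1(G),u_2(G)]$, one obtains an integer $n_0$ with $[u_1(G),u_2(G),G^{n_0}]=1$, which settles the third of the three subgroups and, crucially, records that $G^{n_0}$ centralizes $[u_1(G),u_2(G)]$. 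Set $N=n_0n_1n_2n_3$; then $G^N$ is contained in each $u_i(G)$ and still centralizes $[u_1(G),u_2(G)]$.

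For $[G^n,u_2(G),u_3(G)]$ I would adapt the pigeonhole trick of Lemma \ref{[u(G),G^n] = 1}, using the centralization just recorded to erase the conjugations coming from commutator identities. Fix $g\in G$, $b\in G_{u_2}$, $c\in G_{u_3}$; each $[g^{rN},b,c]$ is a $w$-value, since $g^{rN}$ is an $N$-th power and hence a $u_1$-value. By pigeonhole there exist $0\leq r<s\leq m$ with $[g^{rN},b,c]=[g^{sN},b,c]$, equivalently $[g^{rN},b]\cdot[g^{sN},b]^{-1}\in C_G(c)$. Expanding $[g^{rN},b]=[g^{(r-s)N}\cdot g^{sN},b]$ through the identity $[XY,b]=[X,b]^Y[Y,b]$ turns this product into $[g^{(r-s)N},b]^{g^{sN}}$; since $g^{sN}\in G^N$ centralizes $[g^{(r-s)N},b]\in[u_1(G),u_2(G)]$, the conjugation drops out and, setting $k=s-r\in\{1,\dots,m\}$, one obtains $[g^{kN},b,c]=1$. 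Taking $n=m!\,N$ and writing $g^n=(g^{kN})^{m!/k}$, the centralization of $[g^{kN},b]$ by $g^{kN}$ collapses the standard expansion of $[(g^{kN})^j,b]$ to $[g^{kN},b]^j$; since $[g^{kN},b]$ already commutes with $c$, so does every power of it, yielding $[g^n,b,c]=1$. The subgroup $[u_1(G),G^n,u_3(G)]$ is handled by exactly the same strategy using the dual identity $[a,XY]=[a,Y][a,X]^Y$ in place of $[XY,b]$.

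The main obstacle I anticipate is precisely this bookkeeping: at each use of a commutator identity the ``leftover'' conjugation by a suitable power of $g$ must be recognized as acting on an element of $[u_1(G),u_2(G)]$, so that the centralization from Lemma \ref{[u(G),G^n] = 1} can discharge it. Once this is in place, torsion-freeness of $w(G)$ plays no further role in the lemma itself (it was used earlier to arrive at the current setup), and the argument reduces to a combinatorial pigeonhole argument glued to the single centralization fact supplied by Lemma \ref{[u(G),G^n] = 1}.
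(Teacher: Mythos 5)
Your proof is correct, but it follows a genuinely different route from the paper's. The paper first observes that all values of $w_0=[x_1^n,x_2^n,x_3^n]$ are $w$-values, invokes Theorem \ref{engel-like non-commutator} to conclude that $w_0(G)$ is finite, hence trivial by torsion-freeness of $w(G)$, so that $G^n$ is nilpotent of class at most $2$; it then writes $[t,g_2,g_3]$ as a fixed word $\alpha(y_1,y_2,y_3,y_4)=y_1^{-1}y_2y_3^{-1}y_4$ evaluated at the four conjugates $t^{g_2},t,t^{g_3},t^{g_2g_3}$, notes that $\alpha$ evaluated at $i$th powers is again a $w$-value, and finishes with Lemma \ref{p-lemma} plus torsion-freeness. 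You instead obtain $[u_1(G),u_2(G),G^{n_0}]=1$ in one stroke from Lemma \ref{[u(G),G^n] = 1} applied with $u=[u_1,u_2]$ and $v=u_3$ (using $[u_1,u_2](G)=[u_1(G),u_2(G)]$), and then bootstrap this single centralization through a pigeonhole argument on the $w$-values $[g^{rN},b,c]$, $0\le r\le m$, together with the identities $[XY,b]=[X,b]^Y[Y,b]$ and $[a,XY]=[a,Y][a,X]^Y$, the recorded centralization discharging every leftover conjugation because the conjugated element always lies in $[u_1(G),u_2(G)]$ and the conjugator in $G^N$. Your version dispenses with Theorem \ref{engel-like non-commutator}, with Lemma \ref{p-lemma}, and even with torsion-freeness of $w(G)$, at the cost of more hands-on commutator bookkeeping; the paper's version is shorter on the page but leans on the machinery of Section 3. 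One small point to make explicit in a final write-up: to pass from ``$[g^n,b,c]=1$ for all $g\in G$, $b\in G_{u_2}$, $c\in G_{u_3}$'' to ``$[G^n,u_2(G),u_3(G)]=1$'' you should note that $C_G(u_3(G))$ is normal in $G$ and that $[G^n,u_2(G)]$ is generated by conjugates of the elements $[g^n,b]$, which are again of the same form; the paper's proof relies on the same implicit step, so this is a matter of presentation rather than a gap.
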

\begin{proof}
By Lemma \ref{non-commutator > x^n}, there exists a natural number $n_i$ such that $g^{n_i}$ is a $u_i$-value, for every $g\in G$ and $i=1,2,3$. Set $n=n_1n_2n_3$ and observe that the values of the word $x^n$ are contained $G_{u_1}\cap G_{u_2}\cap G_{u_3}$.

In particular, the values of the word $w_0 = [x_1^n, x_2^n, x_3^n]$ are contained in $G_w$. In view of Theorem \ref{engel-like non-commutator} we conclude that $w_0(G)$ is finite. Since $w(G)$ is torsion-free, $w_0(G)=1$. Thus, $G^n$ is nilpotent of class at most 2. Write $$[x^n, u_2, u_3] = [x^{-n}(x^n)^{u_2}, u_3] = (x^{-n})^{u_2}x^n(x^{-n})^{u_3}(x^n)^{u_2u_3}.$$ Let $\alpha(y_1, y_2, y_3, y_4)$ be the group word $y_1^{-1}y_2y_3^{-1}y_4$. Choose $g_2\in G_{u_2}$, $g_3\in G_{u_3}$, and $t\in G_{x^n}$. Let $K = \langle t, t^{g_2}, t^{g_3}, t^{g_2g_3}\rangle$. Note that the set $$\{\alpha((t^{g_2})^i, t^i, (t^{g_3})^i, (t^{g_2g_3})^i) \, | \, i\in\Bbb Z\}$$ is a subset of $G_w$, as $$\alpha((t^{g_2})^i, t^i, (t^{g_3})^i, (t^{g_2g_3})^i) = [t^{ni}, g_2, g_3].$$ Applying Lemma \ref{p-lemma}, we conclude that $K$ has finite order. Since $w(G)$ is torsion-free, $[G^n,u_2(G),u_3(G)]=1$. The triviality of the other subgroups can be established in a similar way.
\end{proof}

Throughout the rest of this section the word $[u_1, u_2]$ will be denoted by $v$.

\begin{lemma}\label{second reduction argument}
Suppose that the image of $w(G)$ in $G/v(G)'$ is finite. Then $w(G)=1$. 
\end{lemma}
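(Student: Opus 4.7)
The strategy is to reduce to Lemma~\ref{first reduction argument}, applied with non-commutator word $u_3$: it suffices to produce an integer $j\geq 1$ with $w(G)^j\leq C_G(u_3(G))$, for then that lemma forces $w(G)$ finite, whence torsion-freeness gives $w(G)=1$.

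The hypothesis translates immediately: as $w(G)$ is a finitely generated torsion-free abelian group, its finite image in $G/v(G)'$ supplies an integer $j$ with $w(G)^j\leq v(G)'$. The three-subgroups lemma, applied to the normal subgroups $v(G),v(G),u_3(G)$, combined with $w(G)\leq v(G)$ being abelian and normal, yields
\[
[v(G)',u_3(G)]\leq [w(G),v(G)]\leq w(G)\cap v(G)',
\]
so in particular $[w(G)^j,u_3(G)]\leq w(G)\cap v(G)'$.

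The core technical step is to upgrade this inclusion to $[w(G)^j,u_3(G)]=1$. Fix $u\in G_{u_3}$ and let $n$ be as in Lemma~\ref{[W_1, W_2, G^n] = 1}, so that $u^n$ centralises $v(G)\supseteq w(G)$; in $K=w(G)\langle u\rangle$ the element $u^n$ is then central. For any $x\in w(G)$ the identity $1=[x,u^n]=\prod_{i=0}^{n-1}[x,u]^{u^i}$ reads, in the abelian normal subgroup $w(G)$,
\[
(1+u+u^2+\cdots+u^{n-1})\,[x,u]=0.
\]
Combining this module-theoretic constraint with the three-subgroups containment $[w(G)^j,u]\leq w(G)\cap v(G)'$ and exploiting the torsion-freeness of $w(G)$, the plan is to conclude that $w(G)^j\langle u^n\rangle$ is a central subgroup of finite index of $K$. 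Schur's theorem then gives $K'$ finite, and $K'\leq w(G)$ torsion-free forces $K'=1$, so $K$ is abelian and $[w(G),u]=1$.

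The main obstacle is the precise verification that the module-theoretic constraints above do collapse the finite-index lattice $w(G)\cap v(G)'$ to $0$ within the commutator $[w(G)^j,u]$; once this is done for every $u\in G_{u_3}$, we have $w(G)^j\leq C_G(u_3(G))$, and Lemma~\ref{first reduction argument} concludes the proof.
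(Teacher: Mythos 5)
There is a genuine gap. You correctly identify the target (produce $j$ with $w(G)^j\leq C_G(u_3(G))$ and invoke Lemma~\ref{first reduction argument}), and the step $w(G)^j\leq v(G)'$ is fine. But the whole argument then hinges on showing that $v(G)'$ centralizes $u_3(G)$, and this is exactly what you do not establish. Your three-subgroup application only gives $[v(G)',u_3(G)]\leq [w(G),v(G)]$, and you leave $[w(G),v(G)]$ as an unknown subgroup of $w(G)\cap v(G)'$; the "core technical step" of collapsing it to $1$ is explicitly flagged as an obstacle and never carried out. It cannot be carried out by the means you propose: the relation $(1+u+\cdots+u^{n-1})[x,u]=0$ in the torsion-free module $w(G)$ does not force $[x,u]=0$ (an automorphism acting like a primitive $n$th root of unity on $\mathbb{Z}[\zeta_n]$ satisfies it identically), and the claim that $w(G)^j\langle u^n\rangle$ is central in $K=w(G)\langle u\rangle$ presupposes $[w(G)^j,u]=1$, which is precisely what you are trying to prove -- the Schur-theorem step is circular as written.

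The missing ingredient is Lemma~\ref{[W, W_2, ..., W_k] = 1} applied to the word $[u_1,u_2,u_3]$ with $u_3(G)$ formally replaced by $w(G)$: the subgroup $[u_1(G),u_2(G),w(G)]=[v(G),w(G)]$ is finite, hence trivial since it lies in the torsion-free group $w(G)$. With $[v(G),u_3(G),v(G)]=[w(G),v(G)]=1$ in hand, the three-subgroup lemma immediately yields $[v(G),v(G),u_3(G)]=1$, i.e.\ $v(G)'$ centralizes $u_3(G)$, and then $w(G)^j\leq v(G)'$ gives $[w(G)^j,u_3(G)]=1$ with no module-theoretic analysis needed. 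Your outline is recoverable once this one lemma is inserted at the start, but as it stands the proof is incomplete at its decisive point.
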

\begin{proof}
By Lemma \ref{[W, W_2, ..., W_k] = 1}, the subgroup $[u_1(G), u_2(G), w(G)]$ is finite. Since $w(G)$ is torsion-free, it follows that $[u_1(G), u_2(G), w(G)]=1$. Note that $$[u_1(G), u_2(G), w(G)] = [v(G), [v(G), u_3(G)]].$$ The Three Subgroup Lemma yields $$[v(G),v(G),u_3(G)] \leq [v(G), u_3(G), v(G)] = 1$$ and so $v(G)'$ centralizes $u_3(G)$. As the image of $w(G)$ in $G/v(G)'$ is finite, the subgroup $w(G)^j$ is contained in $v(G)'$, for some positive integer $j$. Therefore $[w(G)^j, u_3(G)] = 1$. In view of Lemma \ref{first reduction argument} deduce that $w(G)$ is finite. Taking into account that $w(G)$ is torsion-free, conclude that $w(G)=1$.
\end{proof}

Now we are in position to prove Theorem \ref{3 non-commutators}. For the reader's convenience, we restate it here.

{\sc Theorem \ref{3 non-commutators}. }{\it  Let $u_1,u_2,u_3$ be non-commutator words. Then the word $[u_1,u_2,u_3]$ is concise.}

\begin{proof}
We keep all the assumptions made in this section. Lemma \ref{second reduction argument} allows us to additionally assume that $v(G)$ is abelian. By Lemma \ref{[W_1, W_2, W] = 1}, the subgroups $$[v(G), u_2(G), u_3(G)] \quad \text{and} \quad [u_1(G), v(G), u_3(G)]$$ \noindent are trivial. 

Choose $g_1\in G_{u_1}$, $g_2 \in G_{u_2}$ and $g_3\in G_{u_3}$. Let $C = C_G(u_3(G))$ and set $\overline{G}=G/C$. We use the bar notation to denote the images in $\overline{G}$. Since all values of the words $[u_1,u_2,u_1]$ and $[u_1,u_2,u_2]$ centralize $G_{u_3}$, in $\overline{G}$ we have $$[\bar{g}_1, \bar{g}_2,\bar{g}_1] = [\bar{g}_1, \bar{g}_2, \bar{g}_2] = \bar{1}.$$ We see that the subgroup $\langle \bar{g}_1, \bar{g}_2 \rangle$ is nilpotent of class at most 2. It follows that $[\bar{g}_1, \bar{g}_2]^j$ equals $[\bar{g}_1^j, \bar{g}_2]$ for any integer $j$. By Lemma \ref{[W_1, W_2, G^n] = 1}, there exists some $n$ such that the subgroup $[G^n, u_2(G)]$ also centralizes $u_3(G)$. Then $[\bar{g}_1^n, \bar{g}_2] = \bar{1}$ and so $[g_1, g_2]^n$ centralizes $u_3(G)$. 

Recall that $v(G)$ is abelian. The $w$-value $[g_1,g_2,g_3]$ can be written as $[g_1,g_2]^{-1}[g_1, g_2]^{g_3}$. We have  $$[g_1,g_2,g_3]^n=([g_1,g_2]^{-1}[g_1,g_2]^{g_3})^n=[g_1,g_2]^{-n}([g_1,g_2]^n)^{g_3}=[[g_1,g_2]^n,g_3],$$ which is trivial, by the previous paragraph. This shows that all generators of $w(G)$ have finite orders. Since $w(G)$ is torsion-free, the result follows.
\end{proof}

\section{Proof of Theorem \ref{mcw, non-commutator}}

We will require the following lemma to prove Theorem \ref{mcw, non-commutator} (a proof can be found for example in \cite[Lemma 4.1]{S2}). 
\begin{lemma}\label{bbb} The quotient $G/u(G)$ is soluble for any multilinear commutator word $u$ and a group $G$.
\end{lemma}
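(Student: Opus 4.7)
The plan is to prove the lemma by induction on the number of variables of the multilinear commutator word $u$ (equivalently, on the number of leaves of its associated binary tree). Recall that a multilinear commutator word is either a single variable $x_1$ or of the form $u=[u_1,u_2]$, where $u_1$ and $u_2$ are multilinear commutator words on disjoint sets of variables. In the base case $u=x_1$ we have $u(G)=G$ and the conclusion is trivial.

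For the inductive step, I would assume $u=[u_1,u_2]$ and that the lemma has been established for $u_1$ and $u_2$. Thus there exist integers $d_1,d_2$ with $G^{(d_i)}\leq u_i(G)$; set $d=\max(d_1,d_2)$, so that $G^{(d)}\leq u_1(G)\cap u_2(G)$. The key claim to establish is that
\[
[u_1(G),u_2(G)]\leq u(G).
\]
Granting this, one obtains $G^{(d+1)}=[G^{(d)},G^{(d)}]\leq [u_1(G),u_2(G)]\leq u(G)$, and so $G/u(G)$ is soluble of derived length at most $d+1$.

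To prove the key claim, I would first observe that $u(G)$ is normal in $G$, since the identity $u(g_1,\dots,g_k)^h=u(g_1^h,\dots,g_k^h)$ shows that the set of $u$-values is closed under conjugation. For any $u_1$-value $a$ and $u_2$-value $b$, the commutator $[a,b]$ is by definition a $u$-value and hence lies in $u(G)$. Using the standard identities $[xy,z]=[x,z]^y[y,z]$ and $[x^{-1},z]=([x,z]^{-1})^{x^{-1}}$ together with the normality of $u(G)$, one shows that every commutator between an element of $u_1(G)$ and an element of $u_2(G)$ belongs to $u(G)$.

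The argument is essentially structural and presents no serious obstacle; the only step that demands care is the passage from ``$[a,b]\in u(G)$ for $u_i$-value generators'' to the group-theoretic statement $[u_1(G),u_2(G)]\leq u(G)$, which requires the normality of $u(G)$ and the commutator expansion identities above.
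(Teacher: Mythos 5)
Your proof is correct. The paper does not prove this lemma at all --- it simply cites \cite[Lemma 4.1]{S2} --- so there is no in-paper argument to compare against; but your inductive argument is the standard one and is complete. The induction on the structure of $u=[u_1,u_2]$, the key containment $[u_1(G),u_2(G)]\leq u(G)$, and the derivation $G^{(d+1)}=[G^{(d)},G^{(d)}]\leq[u_1(G),u_2(G)]\leq u(G)$ are all sound; the passage from generators to the full subgroup via $[xy,z]=[x,z]^y[y,z]$, $[x,yz]=[x,z][x,y]^z$, $[x^{-1},z]=([x,z]^{-1})^{x^{-1}}$ and normality of $u(G)$ is exactly the care the step requires, and the disjointness of the variable sets of $u_1$ and $u_2$ is what guarantees that $[a,b]$ is genuinely a $u$-value for independent choices of the $u_1$-value $a$ and the $u_2$-value $b$. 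The only difference worth noting is that the result cited in the literature is usually obtained from a stronger, value-level statement (every $\delta_n$-value is a $u$-value when $u$ has $n$ variables, giving $G^{(n)}\leq u(G)$ with the set inclusion $G_{\delta_n}\subseteq G_u$), whereas your argument yields only the subgroup-level containment with a bound on the derived length coming from the depth of the commutator structure of $u$; for the purposes of this lemma the weaker conclusion suffices.
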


Some important properties of the verbal subgroup associated to a multilinear commutator word in a soluble group are given in the next lemma (taken from \cite[Theorem B]{femo}).

\begin{lemma}\label{fernandez-alcober morigi}
Let $u$ be a multilinear commutator word, and let $G$ be a soluble group. There exists a finite length series of subgroups from 1 to $u(G)$ such that:

(i) all subgroups of the series are normal in $G$;

(ii) every section $U_{i+1}/U_i$ of the series is abelian and can be generated by elements all of whose powers are $U$-values in $G/U_i$.
\end{lemma}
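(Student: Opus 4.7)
The strategy combines the approach of Theorem \ref{3 non-commutators} with the structural information on $u(G)$ provided by Lemma \ref{fernandez-alcober morigi}. First, by Lemma \ref{w(G)' finite} the subgroup $w(G)'$ is finite, and $w(G)/w(G)'$ is finitely generated abelian, being generated by the cosets of the finitely many $w$-values. Applying Lemma \ref{aaa} to the torsion subgroup of $w(G)$ (which is finite and normal), I reduce to the case where $w(G)\cong\mathbb{Z}^d$ is torsion-free, and the task becomes showing $d=0$. Fix $n$ with $G^n\subseteq v(G)$ (Lemma \ref{non-commutator > x^n}) and $[u(G),G^n]=1$ (Lemma \ref{[u(G),G^n] = 1}).

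Next, I would establish that $w(G)\leq Z(u(G))$ by imitating the argument of Lemma \ref{[W, W_2, ..., W_k] = 1} on the $v$-side. For $a\in G_u$ and $c\in w(G)$ the element $c^{rn}=(c^r)^n$ is an $n$th power, hence a $v$-value, so $[a,c^{rn}]\in G_w$; Lemma \ref{normal abelian} yields $[a,c^{rn}]=[a,c]^{rn}$, and the finiteness of $G_w$ combined with the torsion-freeness of $w(G)$ forces $[a,c]=1$. Lemma \ref{bbb} gives the solubility of $G/u(G)$. After further reducing to the case where $G$ itself is soluble, Lemma \ref{fernandez-alcober morigi} supplies a $G$-invariant series $1=U_0\leq U_1\leq\dots\leq U_r=u(G)$ whose sections $U_{i+1}/U_i$ are abelian and generated by elements $x$ all of whose powers $x^k$ are $u$-values in $G/U_i$.

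I would then conclude by induction on $i$ with the claim that $[U_i,b]=1$ for every $b\in G_v$; at $i=r$ this yields $w(G)=[u(G),v(G)]=1$. For the inductive step, fix such $b$ and a generator $x$ of $U_{i+1}/U_i$. Since $w(G)\leq Z(u(G))$, an easy computation using that $[x,b]\in w(G)$ is central in $u(G)$ gives $[x^k,b]=[x,b]^k$. Writing $x^k=y_kz_k$ with $y_k\in G_u$ and $z_k\in U_i$, the induction hypothesis $[U_i,b]=1$ yields $[x^k,b]=[y_k,b]$, which is a $w$-value. Hence $\{[x,b]^k:k\in\mathbb{Z}\}\subseteq G_w$ is finite, so $[x,b]$ has finite order and is therefore trivial by torsion-freeness of $w(G)$. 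Since the $x$'s generate $U_{i+1}$ modulo $U_i$ and the map $a\mapsto[a,b]$ is a homomorphism from $u(G)$ to $w(G)$, this gives $[U_{i+1},b]=1$, closing the induction.

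The principal obstacle is the reduction to soluble $G$ so that Lemma \ref{fernandez-alcober morigi} may be invoked. I expect this to require combining the centrality $w(G)\leq Z(u(G))$ with the solubility of $G/u(G)$ and the presence of the centralizing subgroup $G^n$, perhaps by showing that $u(G)\cdot v(G)$ becomes soluble after the reductions or by descending to a soluble quotient with trivial intersection with $w(G)$. Once the FA-M series is available, the inductive argument above is mechanical, and all remaining steps are of the same flavour as those used in the preceding sections.
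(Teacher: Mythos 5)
Your proposal does not prove the statement at hand. The statement is Lemma \ref{fernandez-alcober morigi}, a structural result about an arbitrary multilinear commutator word $u$ in an arbitrary soluble group $G$: the existence of a normal series $1=U_0\leq\dots\leq U_t=u(G)$ with abelian sections generated by elements all of whose powers are $u$-values in the corresponding quotients. What you have written instead is a sketch of the conciseness argument for $w=[u,v]$ (essentially the content of Lemma \ref{[mcw, non-commutator] sol case} and Theorem \ref{mcw, non-commutator}), and in the middle of it you explicitly invoke ``Lemma \ref{fernandez-alcober morigi} supplies a $G$-invariant series\dots''. As a proof of Lemma \ref{fernandez-alcober morigi} this is circular: the very object whose existence you must establish is assumed. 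Nothing in your text constructs the series, and none of the ingredients you deploy (torsion-freeness of $w(G)$, the integer $n$ with $G^n\leq v(G)$, the word $v$ itself) even appear in the statement to be proved, which concerns $u$ and $G$ alone.

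For the record, the paper does not prove this lemma either; it is quoted from Fern\'andez-Alcober and Morigi \cite[Theorem B]{femo}. An actual proof proceeds by a double induction on the derived length of $G$ and on the structure of the outer commutator word $u$ (writing $u=[\alpha,\beta]$ for shorter multilinear commutator words $\alpha,\beta$ and refining the series obtained for $\alpha(G)$ and $\beta(G)$ against the derived series of $G$), and has no overlap with the conciseness machinery you describe. If your intention was to prove Theorem \ref{mcw, non-commutator}, your outline is broadly in the spirit of the paper's Section 5, but it should then be presented as such and not as a proof of the quoted lemma.
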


For the reader's convenience we restate Theorem \ref{mcw, non-commutator}.
\medskip

{\sc Theorem \ref{mcw, non-commutator}. }{\it Let $u$ be a multilinear commutator word and $v$ a non-commutator word. The word $[u,v]$ is concise.}
\medskip

It will be convenient to prove the theorem first in the particular case where $G$ is soluble. 

\begin{lemma}\label{[mcw, non-commutator] sol case} Let $u,v$ be as in Theorem \ref{mcw, non-commutator}, and set $w=[u,v]$. Let $G$ be a soluble group in which the word $w$ has only  finitely many values. Then $w(G)$ is finite. 
\end{lemma}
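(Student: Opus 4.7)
The plan is to combine the Fernandez-Alcober--Morigi structure theorem (Lemma~\ref{fernandez-alcober morigi}) with an induction along the resulting series, and then apply Lemma~\ref{first reduction argument} to close the argument.

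First, by Lemmas~\ref{w(G)' finite} and~\ref{aaa}, I would reduce to the case where $w(G)$ is torsion-free abelian; since $|G_w|$ is finite, $w(G)$ is finitely generated, so after this reduction it is isomorphic to $\mathbb{Z}^k$ for some $k$. Next I would invoke Lemma~\ref{fernandez-alcober morigi} for the soluble group $G$, obtaining a series $1 = U_0 \leq U_1 \leq \cdots \leq U_r = u(G)$ of subgroups normal in $G$ with each $U_{i+1}/U_i$ abelian and generated by elements all of whose powers are $u$-values in $G/U_i$. The argument then proceeds by induction on $r$; the case $r=0$ is immediate since then $u(G)=1$.

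For the inductive step I would split the work into two parts. First, apply the inductive hypothesis to $\bar G = G/U_1$: the induced series for $u(\bar G) = u(G)/U_1$ still satisfies the conditions of Lemma~\ref{fernandez-alcober morigi} but has length $r-1$, and the set of $w$-values in $\bar G$ has cardinality at most $|G_w|$, so by induction $w(\bar G) = w(G)U_1/U_1$ is finite. Hence $W := w(G) \cap U_1$ has finite index, say $f$, in $w(G)$, and since $w(G) \cong \mathbb{Z}^k$ the quotient has exponent dividing $f$, so $w(G)^f \leq W$. Second, I would establish $[U_1, v(G)] = 1$ directly: for each generator $b$ of the abelian group $U_1$, every power $b^t$ is a $u$-value in $G/U_0 = G$ by the bottom step of the series, and for each $h \in G_v$ Lemma~\ref{normal abelian} gives $[b^t, h] = [b, h]^t$, placing $\{[b, h]^t : t \in \mathbb{Z}\}$ inside the finite set $G_w$. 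Thus $[b, h]$ has finite order, and torsion-freeness of $w(G)$ forces $[b, h] = 1$; standard commutator expansions then upgrade this to $[U_1, v(G)] = 1$.

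Combining the two pieces, $w(G)^f \leq W \leq U_1 \leq C_G(v(G))$, so $w(G)^f$ centralizes $v(G)$, and Lemma~\ref{first reduction argument} with $j = f$ yields that $w(G)$ is finite, hence trivial by torsion-freeness. The main obstacle I anticipate is the second part of the inductive step, namely showing $[U_1, v(G)] = 1$: that is where the power-property of the generators supplied by Lemma~\ref{fernandez-alcober morigi} does the real work, and the whole induction is essentially a mechanism for reducing the general problem to this basic computation at the bottom of the series.
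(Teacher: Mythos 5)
Your proposal is correct and follows essentially the same route as the paper's proof: induction on the length of the Fern\'andez-Alcober--Morigi series, using the power-closed generators of the bottom term $U_1$ together with Lemma~\ref{normal abelian} and torsion-freeness to get $[U_1,v(G)]=1$, then the inductive hypothesis to place a power of $w(G)$ inside $U_1$ and conclude via Lemma~\ref{first reduction argument}. The only differences are cosmetic (order of the two steps and the explicit identification of $w(G)$ with $\mathbb{Z}^k$).
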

\begin{proof} As before, without loss of generality we assume that $w(G)$ is abelian and torsion-free.
Let $1=U_0\leq\dots\leq U_t=u(G)$ be a series as in Lemma \ref{fernandez-alcober morigi}. We assume that the series is chosen with $t$ as small as possible and argue by induction on $t$. If $t=0$, there is nothing to prove so we assume that $t\geq1$.

Let $U=U_1$, and let $X$ be the set of $u$-values contained in $U$ such that their powers are again $u$-values. Thus, $U=\langle X\rangle$. Choose $b\in G_v$. The subgroup $[U,b]$ is generated by commutators $[a,b]$, where $a\in X$. Since $U$ is normal abelian, $[a,b]^r = [a^r,b]$ for all $a\in X$ and all $r\in\Bbb Z$. Taking into account that $X$ is a set of $u$-values whose powers are again $u$-values, the cyclic subgroup $\< [a,b] \>$ is completely contained in $G_w$, which is finite. Note that all such generators $[a,b]$ are $w$-values. Hence, all generators of $[U,b]$ have finite order. Since $w(G)$ is torsion-free, conclude that $[U,b]$=1. Recall that here $b$ is an arbitrary $v$-value and therefore $[U,v(G)]=1$.  

By induction, the image of $w(G)$ in $G/U$ is finite. Therefore $w(G)^j \leq U$ for some $j$. We deduce that $[w(G)^j,v(G)] = 1$. Now the result is immediate from Lemma \ref{first reduction argument}. 
\end{proof}

Now we can proceed to the general (possibly insoluble) case. 

\begin{proof}[Proof of Theorem \ref{mcw, non-commutator}] Again, let $w=[u,v]$ and $G$ be a  group in which the word $w$ has only finitely many values. As before, without loss of generality we assume that $w(G)$ is abelian and torsion-free. Set $C = C_G(v(G))$. It is sufficient to prove that the image of $w(G)$ in $G/C$ is finite because in this case $[w(G)^j, v(G)] = 1$ for some natural $j$, whence the result is immediate from Lemma \ref{first reduction argument}.

A combination of Lemmas \ref{non-commutator > x^n} and \ref{[u(G),G^n] = 1} tells us that there is an $n$ such that the subgroup $G^n$ is contained in $v(G)$ and, moreover, $[u(G),G^n] = 1$. If $a$ is any $u$-value in $G$ and $b$ is any element  of $w(G)$, we have $[a,b]^{n} = [a,b^{n}]=1$. Since $w(G)$ is torsion-free, $[u(G),w(G)] = 1$. 

Thus, $[u(G),[u(G),v(G)]] = 1$. The Three-Subgroup Lemma ensures that $[u(G), u(G), v(G)] = 1$ and so $u(G)'\leq C$. Using Lemma \ref{bbb} we conclude that $G/C$ is soluble. Therefore, by virtue of Lemma \ref{[mcw, non-commutator] sol case}, the image of $w(G)$ in $G/C$ is finite. This completes the proof.
\end{proof}

 \section{Proof of Theorem \ref{[concise, non-commutator]}} 
The goal of this section is to prove Theorem \ref{[concise, non-commutator]}, which
we restate here for the reader’s convenience:

\medskip 
{\sc Theorem \ref{[concise, non-commutator]}. }{\it Let $u$ be a word which is (boundedly) concise in profinite groups and $v$ a non-commutator word. Then also the word $[u,v]$ is (boundedly) concise in profinite groups.}
\medskip

Start with a useful lemma on abstract groups.
\begin{lemma}\label{fg aut group}
Let $G$ be a group, $X$ a subset of $G$, and let $A=\langle \alpha_1,\dots ,\alpha_s\rangle$ be a finitely generated group of automorphisms of $G$. If the set of commutators $[x,\alpha_i]$ with $x\in X$ and $1\leq i\leq s$ is finite with $t$ elements, then $X$ lies in a union of at most $t^s$ right cosets of $C_{G}(A)$.
\end{lemma}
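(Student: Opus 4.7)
The plan is a direct pigeonhole argument based on the map $\Phi\colon X\to S^s$ sending $x$ to the tuple $([x,\alpha_1],\dots,[x,\alpha_s])$, where $S=\{[x,\alpha_i] : x\in X,\,1\leq i\leq s\}$ is the given $t$-element set of commutators. Since the codomain has at most $t^s$ elements, $\Phi$ has at most $t^s$ non-empty fibres, and it will be enough to check that each fibre lies in a single right coset of $C_G(A)$.

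For the key step, I would verify that $\Phi(x)=\Phi(y)$ forces $yx^{-1}\in C_G(A)$. The equality $[x,\alpha_i]=[y,\alpha_i]$ rewrites as $x^{-1}x^{\alpha_i}=y^{-1}y^{\alpha_i}$, which rearranges to
\[
(yx^{-1})^{\alpha_i}=y^{\alpha_i}(x^{\alpha_i})^{-1}=yx^{-1}.
\]
Thus $yx^{-1}$ is fixed by each generator $\alpha_i$, hence by the whole group $A=\langle \alpha_1,\dots,\alpha_s\rangle$, so $yx^{-1}\in C_G(A)$ and $y$ lies in the right coset $C_G(A)x$. Consequently the fibre of $\Phi$ through $x$ is contained in $C_G(A)x$.

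Combining these two observations gives the required bound of $t^s$ right cosets. The argument is entirely elementary; the only non-routine ingredient is the identity $(yx^{-1})^{\alpha}=y^{\alpha}(x^{-1})^{\alpha}$, which is immediate from $\alpha$ being an automorphism, so no real obstacle is anticipated.
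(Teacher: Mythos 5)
Your proposal is correct and follows essentially the same argument as the paper: both define the map $x\mapsto([x,\alpha_1],\dots,[x,\alpha_s])$ into a set of at most $t^s$ tuples and observe that two elements with the same image differ by an element fixed by every $\alpha_i$, hence lie in the same right coset of $C_G(A)$. The computation $(yx^{-1})^{\alpha_i}=yx^{-1}$ is exactly the key step in the paper's proof.
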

\begin{proof} For each $i=1,\dots,s$ set $Y_i = \{[x, \alpha_i]: x \in X\}$. Let $Y=Y_1\times\dots\times Y_s$, and let $\pi$ be the mapping of $X$ into $Y$ which takes every $x\in X$ to $([x, \alpha_1], \dots, [x, \alpha_s])$. If $x$ and $y$ belong to $X$, we have $\pi(x)=\pi(y)$ if and only if $xy^{-1}$ is fixed by each $\alpha_i$, i.e., if and only if $C_G(A)x=C_G(A)y$. Hence the elements of $X$ must lie in at most $|Y|\leq t^s$ distinct cosets of $C_G(A)$.
\end{proof}

A proof of the next lemma can be found in \cite[p. 103]{robinson-finiteness-I}. 
\begin{lemma}\label{condition for finite exponent}
Let $M,N$ be normal subgroups of a group $G$ such that both $[M:C_M(N)]=l_1$ and $[N:C_N(M)]=l_2$ are finite. Then $[M,N]$ has finite exponent dividing $l_1l_2$. 
\end{lemma}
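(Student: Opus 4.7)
The plan is to begin from the observation that since $M/C_M(N)$ and $N/C_N(M)$ are finite groups of orders $l_1$ and $l_2$, every $m\in M$ satisfies $m^{l_1}\in C_M(N)$ and every $n\in N$ satisfies $n^{l_2}\in C_N(M)$. Equivalently, $[M^{l_1},N]=1$ and $[M,N^{l_2}]=1$. Setting $L=[M,N]$, the normality of $M$ and $N$ in $G$ gives $L\subseteq M\cap N$, whence
\begin{equation*}
L^{l_1}\subseteq M^{l_1}\subseteq C_M(N)\subseteq C_G(L)
\end{equation*}
(the last inclusion using $L\subseteq N$), so $L^{l_1}\subseteq Z(L)$; symmetrically $L^{l_2}\subseteq Z(L)$. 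This is the structural input I would exploit throughout.

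Next I would fix $m\in M$, $n\in N$ and set $c=[m,n]\in L$. Expanding the identities $[m,n^{l_2}]=1$ and $[m^{l_1},n]=1$ by the standard rules $[a,bc]=[a,c][a,b]^c$ and $[xy,z]=[x,z]^y[y,z]$ yields the two relations
\begin{equation*}
c\cdot c^{n}\cdot c^{n^{2}}\cdots c^{n^{l_2-1}}=1\quad\text{and}\quad c^{m^{l_1-1}}\cdots c^{m}\cdot c=1
\end{equation*}
inside $L$. Since $c\in M$, we have $c^{l_1}\in M^{l_1}\subseteq C_M(N)$, so $c^{l_1}$ commutes with $n$ and with each conjugate $c^{n^i}$; symmetrically $c^{l_2}$ commutes with $m$ and with each $c^{m^j}$. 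Raising the first displayed relation to the $l_1$-th power and collecting the factors --- using the centrality of $L^{l_1}$ and $L^{l_2}$ in $L$ to absorb the reordering errors into $Z(L)$ --- one arrives at $c^{l_1l_2}=1$ for every individual commutator $c=[m,n]$.

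Finally, since $L^{l_1l_2}\subseteq L^{l_1}\subseteq Z(L)$, standard commutator calculus (the class-$2$ identity modulo the center) shows that the subset $\{x\in L:x^{l_1l_2}=1\}$ is closed under products; being a subgroup containing every generator $[m,n]$ it must coincide with $L$, so $[M,N]$ indeed has exponent dividing $l_1l_2$. The main obstacle in this plan is the middle step, the collection process: the conjugates $c^{n^i}$ and $c^{m^j}$ are not pairwise commuting, and one must verify with some care that the error terms produced when reordering them all lie in $Z(L)$ and cancel to yield precisely the exponent $l_1l_2$ (rather than a larger quantity). This combinatorial bookkeeping is the technical heart of the argument, and is exactly what is worked out in the pages of Robinson's textbook cited in the statement.
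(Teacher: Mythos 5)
The paper itself offers no proof of this lemma; it is imported verbatim from Robinson's book (the stated source is \cite[p. 103]{robinson-finiteness-I}), so there is no internal argument to measure yours against, and your proposal must stand on its own. It does not, for two reasons. The first is the step you yourself flag as the ``technical heart'': the collection process is never carried out, and deferring it to the very reference whose statement you are supposed to be proving is not a proof. Concretely, raising the relation $c\,c^{n}c^{n^{2}}\cdots c^{n^{l_2-1}}=1$ to the $l_1$-th power only yields the tautology $1=1$; what you actually need is to compare $\bigl(\prod_i c^{n^{i}}\bigr)^{l_1}=1$ with $\prod_i \bigl(c^{n^{i}}\bigr)^{l_1}=\prod_i \bigl(c^{l_1}\bigr)^{n^{i}}=c^{l_1 l_2}$, and the discrepancy between these two expressions is a product of commutators $[c^{n^{i}},c^{n^{j}}]$. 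Your hypotheses give only that $L/Z(L)$ has exponent dividing $\gcd(l_1,l_2)$; this does not make $L$ nilpotent of class $2$ and provides no reason for those error terms to cancel, let alone to cancel exactly.

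The second gap is the final paragraph, which rests on a false principle. Even if every commutator $c=[m,n]$ were shown to satisfy $c^{l_1l_2}=1$, the set $\{x\in L: x^{l_1l_2}=1\}$ need not be closed under products when $L^{l_1l_2}\leq Z(L)$: in the dihedral group of order $8$ one has $L^{2}=Z(L)$, yet the product of the reflections $s$ and $sr$ is the rotation $r$ of order $4$. The class-$2$ identity $(xy)^{k}=x^{k}y^{k}[y,x]^{k(k-1)/2}$ leaves the uncontrolled factor $[y,x]^{k(k-1)/2}$, so bounding the orders of the generators does not bound the exponent of the subgroup they generate. A correct argument must address arbitrary elements of $[M,N]$ directly; the standard route exploits that $[mc,nd]=[m,n]$ for $c\in C_M(N)$ and $d\in C_N(M)$ (so there are at most $l_1l_2$ distinct commutators) and that, writing $T=[M,N]$, the map $n\mapsto [m,n]\,[T,N]$ is a homomorphism of $N$ into the abelian group $T/[T,N]$ killing $C_N(M)$, which produces exponent statements about all of $T$ rather than order statements about its generators.
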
 

In what follows by ``a subgroup of a profinite group $G$'' we will always mean a {\it closed} subgroup. We say that a profinite group is generated by a set $X$ if it is {\it topologically} generated by that set. In particular, if $w$ is a group-word, $w(G)$ stands for the minimal closed subgroup containing $G_w$.

Recall that a word $w$ is said to have finite width $s$ in a group $G$ if every element of $w(G)$ can be written as a product of at most $s$ $w$-values and their inverses. 

\begin{proof}[Proof of Theorem \ref{[concise, non-commutator]}] We will give a detailed proof of the theorem only in the case where the word $u$ is boundedly concise in profinite groups. The other case can be obtained in a similar way. 

Set $w=[u,v]$ and let $G$ be a profinite group such that $|G_w|=m$. Suppose that $w$ involves exactly $r$ distinct variables. There exists a subgroup $H$ of $G$, generated by at most $mr$ elements, such that $G_w=H_w$. It is sufficient to show that $w(H)$ has finite $(m,w)$-bounded order so without loss of generality we can assume that $G=H$. By Lemma \ref{non-commutator > x^n} there is a $v$-bounded number $n$ such that $G^n \leq v(G)$. According to the solution of the Restricted Burnside Problem \cite{ze1,ze2} the quotient $G/G^n$ has finite $(m,w)$-bounded order. So the index $[G:v(G)]$ is $(m,w)$-bounded. It follows that $v(G)$ can be generated by an $(m,w)$-bounded number of elements. Moreover, the word $v$ has $(m,w)$-bounded width in $G$ (see \cite{ns1}) -- any element of $v(G)$ can be written as a product of $(m,w)$-boundedly many $v$-values. Therefore $v(G)$ can be generated by an $(m,w)$-bounded number of elements from $G_v$, say $a_1,\dots,a_s$. 

Set $C_1=C_G(u(G))$ and $C_2=C_G(v(G))$. The subgroup $v(G)$ naturally acts on $u(G)$ via conjugation. Let $A$ be the group of automorphisms of $u(G)$ induced by the action of $v(G)$.
Set $X=G_u$. Note that the set of $[x,a_i]$, where $x\in X$ and $1\leq i\leq s$, is contained in $G_w$ and so it has at most $m$ elements. By Lemma \ref{fg aut group} the set $X$ is contained in a union of at most $m^{s}$ cosets of $C_2 \cap u(G)$. Therefore in the group $G/C_2$ the word $u$ takes only $(m,w)$-boundedly many values.

As the word $u$ is boundedly concise in the class of profinite groups, we deduce that the subgroup $u(G)C_2/C_2$ is finite of $(m,w)$-bounded order. Therefore the group of automorphisms of $v(G)$ induced by conjugation by elements of $u(G)$ is finite. Repeating the above argument we deduce from Lemma \ref{fg aut group} that the quotient $v(G)C_1/C_1$ is also finite of $(m,w)$-bounded order, since $v$ is also boundedly concise.

Now, we have the following situation: $C_2\cap u(G)$ is a normal subgroup of $G$, having $(m,w)$-bounded index in $u(G)$ and centralizing $v(G)$. Also, $C_1\cap v(G)$ is a normal subgroup of $G$ that has $(m,w)$-bounded index in $v(G)$ and centralizes $u(G)$. Lemma \ref{condition for finite exponent} says that the exponent of $[u(G),v(G)]$ is $(m,w)$-bounded. This means that $w(G)$ has $(m,w)$-bounded exponent. Recall that the order of $w(G)'$ is $m$-bounded. Clearly, this implies that the order of $w(G)$ is $m$-bounded, as required.
\end{proof}

\end{document}